\let\cl@chapter\undefined
\algrenewcommand\algorithmicrequire{\textbf{Input:}}
\newtheorem{definition}{Definition}
\newtheorem{remark}{Remark}
\newtheorem{proposition}{Proposition}
\newtheorem{assumption}{Assumption}
\newtheorem{lemma}{Lemma}
\newtheorem{corollary}{Corollary}
\crefname{appsec}{Appendix}{Appendices}
\crefname{assumption}{Assumption}{Assumptions}
\crefname{corollary}{Corollary}{Corollaries}
\crefname{algorithm}{Algorithm}{Algorithms}
\crefname{proposition}{Proposition}{Propositions}
\crefname{lemma}{Lemma}{Lemmas}
\crefname{definition}{Definition}{Definitions}
\crefname{remark}{Remark}{Remarks}
\crefname{figure}{Figure}{Figures}
\crefname{table}{Table}{Tables}
\DeclareMathOperator{\fix}{fix}
\DeclareMathOperator{\diam}{diam}
\newcommand{\norm}[1]{\left\lVert#1\right\rVert}
\newcommand{\nnorm}[1]{{\left\vert\left\vert\left\vert#1\right\vert\right\vert\right\vert}}
\newcommand{\llangle}{\langle\mkern-2.5mu\langle}
\newcommand{\rrangle}{\rangle\mkern-2.5mu\rangle}
\newcommand{\ev}[2][]{\mathbb{E}_{#1}\left[#2\right]}
\newcommand{\pr}[1]{\mathbb{P}\left[#1\right]}
\newcommand{\e}{\mathbold{e}}
\newcommand{\uv}{\mathbold{u}}
\newcommand{\x}{\mathbold{x}}
\newcommand{\y}{\mathbold{y}}
\newcommand{\0}{\boldsymbol{0}}
\newcommand{\I}{\mathcal{I}}
\newcommand{\T}{\mathcal{T}}
\newcommand{\D}{\mathbb{D}}
\newcommand{\Is}{\mathbb{I}}
\newcommand{\Hs}{\mathbb{H}}
\newcommand{\N}{\mathbb{N}}
\newcommand{\R}{\mathbb{R}}
\newcommand{\km}{Krasnosel'ski\u{\i}-Mann\xspace}
\newcommand{\sw}{\mathrm{subW}}
\newcommand{\ubar}[1]{\underaccent{\bar}{#1}}
\newcommand{\pmin}{\ubar{p}}
\newcommand{\pmax}{\bar{p}}
\newcommand{\lmin}{\ubar{\lambda}}
\newcommand{\lmax}{\bar{\lambda}}
\begin{document}

\title{A Stochastic Operator Framework for Optimization and Learning\\with Sub-Weibull Errors}

\author{Nicola Bastianello, Liam Madden, Ruggero Carli, Emiliano Dall'Anese
\thanks{N. Bastianello is with the School of Electrical Engineering and Computer Science, and Digital Futures, KTH Royal Institute of Technology, Sweden. {\tt\small nicolba@kth.se}.}
\thanks{L. Madden is with the Department of Electrical and Computer Engineering, University of British Columbia, Vancouver, BC, CA. {\tt\small liam@ece.ubc.ca}}
\thanks{R. Carli is with the Department of Information Engineering (DEI), University of Padova, Italy. {\tt\small carlirug@dei.unipd.it}.}
\thanks{E. Dall'Anese is with the Department of Electrical, Computer and Energy Engineering, and affiliate faculty of the Department of Applied Mathematics, University of Colorado Boulder, Boulder, CO, US. {\tt\small emiliano.dallanese@colorado.edu}}
}

\maketitle

\begin{abstract}
This paper proposes a framework to study the convergence of stochastic optimization and learning algorithms. The framework is modeled over the different challenges that these algorithms pose, such as (i) the presence of random additive errors (\textit{e.g.} due to stochastic gradients), and (ii) random coordinate updates (\textit{e.g.} due to asynchrony in distributed set-ups). The paper covers  both convex and strongly convex problems, and it also analyzes online scenarios, involving changes in the data and costs.
The paper relies on interpreting stochastic algorithms as the iterated application of stochastic operators, thus allowing us to use the powerful tools of operator theory. In particular, we consider operators characterized by additive errors with sub-Weibull distribution (which parameterize a broad class of errors by their tail probability), and random updates.
In this framework we derive convergence results in mean and in high probability, by providing bounds to the distance of the current iteration from a solution of the optimization or learning problem.
The contributions are discussed in light of federated learning applications.
\end{abstract}

\begin{IEEEkeywords}
Stochastic operators, inexact optimization, online optimization, high probability convergence, federated learning
\end{IEEEkeywords}

\section{Introduction}\label{sec:introduction}
Recent technological advances in a range of disciplines -- from machine learning, to data-driven optimization and control,
with their applications to smart power grids, traffic networks,  healthcare, etc. -- introduced a wide set of challenges to the implementation and analysis of optimization and learning algorithms. As a motivating example, we elaborate on these challenges in the context of  federated learning~\cite{li_federated_2020,gafni_federated_2022}, which was designed to allow a set of agents to cooperatively train a model without the need to directly share data.
Due to the distributed set-up, algorithms designed in this framework need to deal with \textit{asynchrony, and limited or unreliable communications}. This is similarly a challenge in any application where multi-agent systems are deployed, such as distributed optimization and parallel computing \cite{peng_coordinate_2016,bianchi_coordinate_2016,bastianello_asynchronous_2020,salzo_parallel_2021}.
Additionally, due to the size of available data-sets, the agents may need to resort to the use of \textit{stochastic gradients}, which are computed on a sub-set of the available data \cite{dixit_online_2019,liu_primer_2020,bottou2018optimization}. Using stochastic gradients reduces the computational cost but may lead to less accuracy.
Alternatively, when gradients are not directly accessible, for example due to sparse users' feedback \cite{simonetto_personalized_2021}, agents may need to approximate them from functional evaluations ($0$-th order gradients) \cite{duchi_optimal_2015,nesterov_random_2017,berahas_theoretical_2022}.
Finally, the data on which the agents train their model may change over time, due to changes in the phenomenon being observed or new data arriving in real time \cite{shalev-shwartz_online_2011,dallanese_convergence_2019,simonetto_time-varying_2020,hauswirth_timescale_2021}. This turns the problem into an \textit{online learning} problem, with the agents now tracking the optimal model as it changes in response to changes in the data.

Abstracting away from this example, in many applications of optimization and learning we need to design and analyze algorithms that are \textit{provably robust to different sources of stochasticity and changes in the data}. In particular, in this paper we focus on the analysis side of this challenge by leveraging the formalism of \textit{operator theory}.
Operator theory has been shown to offer a valuable tool for the analysis of algorithms, both in traditional static settings \cite{ryu_primer_2016,combettes_monotone_2018,bauschke_convex_2017,davis_convergence_2016} and in online settings \cite{shalev-shwartz_online_2011,simonetto_time-varying_2017,simonetto_time-varying_2020,Jadbabaie2015}. The key insight is that algorithmic steps can be interpreted as operators (or maps when working in an Euclidean space), with fixed points of the operators coinciding with  optimal solutions of the optimization or learning problem \cite{bauschke_convex_2017,combettes_monotone_2018,davis_convergence_2016}. This link gives access to powerful tools to characterize the convergence of existing algorithms \cite{bauschke_convex_2017,ryu_primer_2016,davis_convergence_2016}, and it further inspires the design of new ones, \emph{e.g.} \cite{beck_fast_2009,davis_three-operator_2017,themelis_supermann_2019}.

However, as discussed above, in many applications of interest the algorithms we apply are stochastic. To analyze their convergence, we will therefore make use of the formalism of \textit{stochastic operators}, which we review in the following.
One source of stochasticity is that of random coordinate updates, in which only some part of the operator is applied at any given iteration \cite{combettes_stochastic_2015,combettes_stochastic_2019,berinde_iterative_2007,bianchi_coordinate_2016,peng_arock_2016,salzo_parallel_2021}. This class of operators model distributed algorithms in which only some agents, \textit{e.g.} due to asynchrony, perform an update and hence apply the corresponding part of the algorithm/operator.
Operators subject to additive errors, \textit{e.g.} due to the use of stochastic gradients, have also been studied, see \cite{berinde_iterative_2007,borkar_concentration_2021,combettes_stochastic_2015,combettes_stochastic_2019,dixit_online_2019}. It is typical to handle stochastic errors in the algorithmic map or operator by either assuming that their norm vanishes asymptotically, or by multiplying them by a vanishing parameter. While this choice allows one to prove almost sure convergence, in many practical applications the additive error is not (or cannot be made) vanishing, especially in an online scenario.

To delineate our contribution in the context of the discussion above,  we analyze algorithms that can be characterized by the stochastic iteration (formalized in \cref{sec:framework}):
\begin{equation}\label{eq:bp-prototype}
    x_i^{\ell+1} = \begin{cases}
        \T_i \x^\ell + e_i^\ell & \text{w.p.} \ p_i \\
        x_i^\ell & \text{w.p.} \ 1 - p_i
    \end{cases}, \quad \ell \in \N
\end{equation}
where $\T \x = (\T_1 \x, \ldots, \T_n \x)$ is an operator, $\x = (x_1, \ldots, x_n)$, and $e_i^\ell$ is an additive error.
The stochasticity in~\cref{eq:bp-prototype} has two sources: \emph{(i)} each ``coordinate'' of the operator is updated with probability $p_i$ (\emph{e.g.} due to asynchrony), and \emph{(ii)} the update is performed using an inexact version of the operator (\textit{e.g.} stochastic gradients). Additionally, we will be interested in the \emph{(iii)} \emph{online} scenario in which the operator changes over time to model applications in online optimization and learning; that is, at time $\ell+1$ we apply $\T_i^{\ell+1}$. The iteration~\cref{eq:bp-prototype} will be studied under the assumption that the operator is either contractive or averaged (in which case~\cref{eq:bp-prototype} can be seen as a \emph{stochastic \km}).

As in \emph{e.g.}, \cite{combettes_stochastic_2015,combettes_stochastic_2019}, we model inexact operators as operators subject to the additive errors $e_i$. However, differently from previous results, in this paper we perform our analysis for errors with \textit{sub-Weibull distributions}. That is, the norm of $e_i$ satisfies
\begin{equation}\label{eq:tail-bound-prototype}
    \pr{\norm{e_i} \geq \epsilon} \leq 2 \exp\left( - \left( \frac{\epsilon}{\nu} \right)^{1 / \theta} \right), \quad \forall \,\, \epsilon > 0
\end{equation}
for some parameters $\theta \geq 0$, $\nu > 0$. The class of sub-Weibull r.v.s allows  us to consider distributions for the additive errors that may have heavy tails \cite{vladimirova_subweibull_2020,zhang_concentration_2021,kuchibhotla_moving_2022,wong_lasso_2020}; this class is also general and it includes the  sub-Gaussian and sub-exponential classes as sub-cases, as well as random variable whose distribution has a finite support~\cite{boucheron_concentration_2013,vershynin_high-dimensional_2018}.  As explained shortly, the sub-Weibull model will allow us to provide high probability bounds for the convergence of~\cref{eq:bp-prototype} by deriving pertinent concentration results. As defined in~\cref{eq:tail-bound-prototype}, sub-Weibull random variables are characterized by a tail parameter $\theta$, that defines the decay rate of the tails. Besides the convenient properties of the sub-Weibull class (\emph{e.g.} closure under scaling, sum, and product) that allow for a unified theoretical analysis, there is a growing body of work showing that heavy-tailed distributions arise in machine learning applications, see \cite{vladimirova_subweibull_2020,vladimirova_understanding_2019,gurbuzbalaban_heavy-tail_2021,zhu_beyond_2022,NEURIPS2020_abd1c782,NEURIPS2021_26901deb,nguyen_high_2023,jakovetic_nonlinear_2023}. This is especially relevant while training (deep) neural networks, and this phenomenon can also arise in decentralized learning set-ups \cite{gurbuzbalaban_heavy-tail_2022}.

\vspace{.2cm}

\noindent \textbf{Contributions.} Overall, this paper offers the following contributions:
\begin{enumerate}
    \item We study the convergence of~\cref{eq:bp-prototype}, and provide convergence results in mean and in high probability when the errors follow a sub-Weibull distribution. In particular, bounds are offered for the distance from the fixed point (if the operator is contractive) or for the \emph{cumulative fixed point residual} (if the operator is averaged). These error bounds hold for any iteration $\ell \in \N$ with a given, arbitrary probability. We also show that the high-probability bounds -- in the form $ \pr{\norm{x} \leq \epsilon(\delta)} \geq 1-\delta$ for the random variable $x$ -- scale with a factor $\sim \log(1/\delta)$, as opposed to a scaling $\sim 1/\delta$  that one would obtain via Markov's inequality.
    
    \item The framework we propose leverages a sub-Weibull model \cite{vladimirova_subweibull_2020,zhang_concentration_2021,kuchibhotla_moving_2022,wong_lasso_2020} for the norm of the additive errors. To the best of our knowledge, this is the first time that sub-Weibull models are employed in combination with operator-theoretic tools. By using sub-Weibulls, we are able to model a very broad class of random variables, fitting different practical applications.
    
    \item As mentioned above, the convergence results proposed in this paper hold with high probability, as opposed to the almost sure convergence results of \emph{e.g.} \cite{combettes_stochastic_2015}. We discuss the difference between the two viewpoints, and also further characterize the convergence of~\cref{eq:bp-prototype} in almost sure terms (under some additional assumptions, such as vanishing errors).
    
    \item The analysis provided also holds in online scenarios, in which the operator characterizing~\cref{eq:bp-prototype} changes over time. 
\end{enumerate}

\vspace{.2cm}

\noindent \textbf{Organization.} In \cref{sec:preliminaries} we review some preliminaries in operator theory, and introduce the sub-Weibull formalism. In \cref{sec:framework}, building on the motivating example of federated learning, we formalize and discuss the stochastic framework. In \cref{sec:mean-convergence,sec:high-probability} we present and discuss, respectively, mean and high probability convergence results derived in the proposed framework. Finally, \cref{sec:numerical} presents some illustrative numerical results.

\section{Preliminaries}\label{sec:preliminaries}

\subsection{Operators}\label{subsec:operators}
Let $\Hs_i$, $i \in \{ 1, \ldots, n \}$ be (possibly infinite-dimensional) Hilbert spaces with inner product $\langle \cdot, \cdot \rangle_i$, induced norm $\norm{\cdot}_i$ and identity $\I_i : \Hs_i \to \Hs_i$. We consider the direct sum space $\Hs = \Hs_1 \oplus \cdots \oplus \Hs_n$, whose elements are $\x = (x_1, \ldots, x_n)$ with $x_i \in \Hs_i$ for all $i \in \{ 1, \ldots, n \}$. For $\x, \y \in \Hs$, we define the inner product  as $\langle \x, \y \rangle = \sum_{i = 1}^n \langle x_i, y_i \rangle_i$, and denote the induced norm and identity of $\Hs$ as $\norm{\cdot}$ and $\I : \Hs \to \Hs$, respectively. We consider operators $\T : \Hs \to \Hs$ defined as
\begin{equation}
    \T \x = (\T_1 \x, \ldots, \T_n \x)
\end{equation}
where $\T_i : \Hs \to \Hs_i$ for all $i \in \{ 1, \ldots, n \}$.

A central theme of the paper is to  compute fixed points of a given operator via iterative algorithms. To this end, in the following we introduce pertinent definitions and results. For a background on operator theory we refer to \emph{e.g.} \cite{cegielski_iterative_2012,bauschke_convex_2017}.

\begin{definition}[Fixed points]
Let $\T : \Hs \to \Hs : \x \mapsto \T \x = (\T_1 \x, \ldots, \T_n \x)$. The point $\bar{\x} \in \Hs$ is a \emph{fixed point} of $\T$ if $\bar{\x} = \T \bar{\x}$. We denote the \emph{fixed set} of $\T$ as $\fix(\T) = \{ \x \in \Hs \ | \x = \T \x \}$.
\end{definition}

\noindent Notice that a fixed point $\bar{\x} = (\bar{x}_1, \ldots, \bar{x}_n)$ by definition is such that $\bar{x}_i = \T_i \bar{\x}$ for all $i \in \{ 1, \ldots, n \}$. In the following, we review well-known definitions and properties of operators (see, e.g., \cite{bauschke_convex_2017}).

\begin{definition}[Non-expansive, contractive]\label{def:lipschitz-operators}
The operator  $\T$ is \emph{$\zeta$-Lipschitz continuous}, $\zeta \geq 0$, if:
\begin{equation}
	\norm{\T \x - \T \y} \leq \zeta \norm{\x - \y}, \quad \forall \x, \y \in \Hs.
\end{equation}
The operator is \emph{non-expansive} if $\zeta = 1$, and \emph{contractive} if $\zeta \in (0, 1)$. 
\end{definition}

\begin{definition}[Averaged]\label{def:averaged-operators}
The operator $\T$ is \emph{averaged} if and only if there exist $\alpha \in (0,1)$ and a non-expansive operator $\mathcal{R} : \Hs \to \Hs$ such that $\T = (1-\alpha) \I + \alpha \mathcal{R}$.
Equivalently, $\T$ is $\alpha$-averaged if
$$
	\norm{\T \x - \T \y}^2 \leq \norm{\x - \y}^2 - \frac{1 - \alpha}{\alpha} \norm{(\I - \T) \x - (\I - \T) \y}^2
$$
for any $\x, \y \in \Hs$.
\end{definition}

We discuss now the existence of fixed points of non-expansive operators.

\begin{lemma}[Browder's theorem]\label{lem:browder-theorem}
Let $\D \subset \Hs$ be a non-empty, convex, compact subset, and let $\T : \D \to \D$ be non-expansive, then $\fix(\T) \neq \emptyset$ \cite[Theorem~4.29]{bauschke_convex_2017}.
\end{lemma}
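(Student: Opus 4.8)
The plan is the classical Browder--G\"ohde--Kirk argument: approximate $\T$ by genuine Banach contractions, extract an approximate fixed-point sequence from the resulting fixed points, and pass to a weak cluster point using weak compactness of $\D$ together with the demiclosedness of $\I - \T$ at the origin. I read the hypothesis here in the non-component-wise, non-\emph{quasi} sense (i.e.\ $\norm{\T\x - \T\y} \le \norm{\x - \y}$ for all $\x, \y \in \D$), consistently with \cite[Theorem~4.29]{bauschke_convex_2017}: the proof below needs contractivity of the perturbed maps and a Lipschitz bound on $\T$ around a point not yet known to be fixed, so the purely \emph{quasi} version (which is vacuous when $\fix(\T) = \emptyset$) cannot be used directly.

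\emph{Contractive approximations and approximate fixed points.} Fix any $\z \in \D$. For $t \in (0,1)$ define $\T_t : \D \to \D$ by $\T_t \x = t\,\T\x + (1-t)\,\z$; since $\D$ is convex and both $\T\x$ and $\z$ lie in $\D$, we have $\T_t\x \in \D$, and $\norm{\T_t\x - \T_t\y} = t\norm{\T\x - \T\y} \le t\norm{\x - \y}$, so $\T_t$ is a $t$-contraction of the complete metric space $\D$ (a closed subset of a Hilbert space). By the Banach--Picard theorem each $\T_t$ has a unique fixed point $\x_t \in \D$, i.e.\ $\x_t = t\,\T\x_t + (1-t)\,\z$, which rearranges to $\x_t - \T\x_t = (1-t)(\z - \T\x_t)$. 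Hence $\norm{\x_t - \T\x_t} \le (1-t)\,\diam(\D)$, and since $\D$ is bounded this tends to $0$ as $t \uparrow 1$. Choosing $t_k \uparrow 1$ and setting $\x_k := \x_{t_k}$ yields a sequence in $\D$ with $\norm{\x_k - \T\x_k} \to 0$.

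\emph{Passing to the weak limit.} As $\D$ is bounded, closed and convex in a Hilbert space, it is weakly (sequentially) compact, so $(\x_k)$ has a subsequence (not relabeled) converging weakly to some $\bar\x \in \D$. It remains to show $\bar\x \in \fix(\T)$; this is exactly the demiclosedness of $\I - \T$ at $0$, which I would either cite from \cite{bauschke_convex_2017} or establish via Opial's inequality. In the latter route, suppose $\bar\x \ne \T\bar\x$. Opial's lemma gives $\liminf_k \norm{\x_k - \bar\x} < \liminf_k \norm{\x_k - \T\bar\x}$, whereas non-expansiveness together with $\norm{\x_k - \T\x_k} \to 0$ yields $\norm{\x_k - \T\bar\x} \le \norm{\x_k - \T\x_k} + \norm{\T\x_k - \T\bar\x} \le \norm{\x_k - \T\x_k} + \norm{\x_k - \bar\x}$, so $\liminf_k \norm{\x_k - \T\bar\x} \le \liminf_k \norm{\x_k - \bar\x}$ --- a contradiction. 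Therefore $\T\bar\x = \bar\x$ and $\fix(\T) \ne \emptyset$.

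\emph{Main obstacle.} Everything up to and including the extraction of $\bar\x$ is bookkeeping (convexity to keep iterates in $\D$, Banach--Picard, boundedness, weak compactness). The only genuinely delicate point is the last step --- upgrading an approximate fixed point to an exact one --- which is a Hilbert-space phenomenon resting on the parallelogram identity. Concretely, Opial's inequality follows from weak lower semicontinuity of the norm applied to the expansion $\norm{\x_k - z}^2 = \norm{\x_k - \bar\x}^2 + 2\langle \x_k - \bar\x,\, \bar\x - z \rangle + \norm{\bar\x - z}^2$ together with $\langle \x_k - \bar\x,\, \bar\x - z \rangle \to 0$; I would include this short computation (or simply invoke the demiclosedness principle from \cite{bauschke_convex_2017}) to keep the proof self-contained.
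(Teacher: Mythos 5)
The paper offers no proof of this lemma; it is cited directly from \cite[Theorem~4.29]{bauschke_convex_2017}, so there is no in-paper argument to compare against. Your proof is the standard Browder--G\"ohde--Kirk argument and it is correct: the contractive approximations $\T_t = t\T + (1-t)\z$, the Banach--Picard fixed points $\x_t$ with $\norm{\x_t - \T\x_t} \le (1-t)\diam(\D)$, weak sequential compactness of the bounded closed convex set $\D$, and the demiclosedness of $\I - \T$ at $0$ via Opial's property are all handled properly, and the Opial computation you sketch (expanding $\norm{\x_k - z}^2$ and using weak convergence to kill the cross term) is the right one. Your preliminary remark deserves emphasis rather than apology: as literally stated with ``quasi-non-expansive'' in the sense of \cref{def:lipschitz-operators}, which quantifies over $\y \in \fix(\T)$, the hypothesis is vacuous precisely when $\fix(\T) = \emptyset$, so the lemma would then assert that \emph{every} self-map of a bounded closed convex set has a fixed point --- false in infinite dimensions (Kakutani's fixed-point-free Lipschitz self-map of the unit ball of $\ell^2$). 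The cited theorem indeed assumes genuine nonexpansiveness, and your proof correctly identifies the two places where that stronger hypothesis is indispensable: the $t$-contractivity of $\T_t$, and the bound $\norm{\T\x_k - \T\bar\x} \le \norm{\x_k - \bar\x}$ at the cluster point $\bar\x$, which is not yet known to be fixed. The only editorial suggestion is that, in the context of this paper, one could simply flag the hypothesis mismatch and defer to the citation; but as a self-contained proof yours is complete and buys the reader the actual mechanism (approximate fixed points plus demiclosedness) that the citation hides.
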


\begin{lemma}[Banach-Picard theorem]\label{lem:banach-picard}
Let $\T : \Hs \to \Hs$ be a contractive operator, then $\fix(\T)$ is a singleton \cite[Theorem~1.50]{bauschke_convex_2017}. The unique fixed point is the limit of the sequence generated by:
$$
	\x^{\ell+1} = \T \x^\ell, \quad \ell \in \N.
$$
\end{lemma}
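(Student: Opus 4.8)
The plan is to prove the two assertions of the lemma — uniqueness of the fixed point, and convergence of the Banach-Picard iteration with a geometric rate — separately, since uniqueness is essentially free whereas the existence/convergence part carries the real content. For uniqueness I would argue directly from \cref{def:lipschitz-operators}: if $\x^*,\y^* \in \fix(\T)$, then instantiating the quasi-Lipschitz bound at $\x = \x^*$ and $\y = \y^*$ gives, for each coordinate $i$, $\norm{x_i^* - y_i^*}_i = \norm{\T_i \x^* - y_i^*}_i \le \zeta\,\norm{x_i^* - y_i^*}_i$; since $\zeta \in (0,1)$ this forces $\norm{x_i^* - y_i^*}_i = 0$ for every $i$, hence $\x^* = \y^*$ and $\fix(\T)$ has at most one element.

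For existence and convergence I would run the classical Banach-Picard/Cauchy argument in the space $\Hs$, which is complete, being a finite direct sum of Hilbert spaces. The key observation is that the coordinatewise contraction estimate makes $\T$ a genuine $\zeta$-contraction on $(\Hs,\norm{\cdot})$, since $\norm{\T\x - \T\y}^2 = \sum_i \norm{\T_i\x - \T_i\y}_i^2 \le \zeta^2\sum_i\norm{x_i - y_i}_i^2 = \zeta^2\norm{\x-\y}^2$. From $\norm{\x^{\ell+1}-\x^\ell} \le \zeta\norm{\x^\ell-\x^{\ell-1}} \le \cdots \le \zeta^\ell\norm{\x^1-\x^0}$ and summability of the geometric series, $\{\x^\ell\}$ is Cauchy, hence converges to some $\x^* \in \Hs$; Lipschitz continuity of $\T$ then lets me pass to the limit in $\x^{\ell+1} = \T\x^\ell$ to obtain $\x^* = \T\x^*$, so $\fix(\T) \ne \emptyset$ and, by the previous paragraph, equals $\{\x^*\}$. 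Once existence is secured, the geometric rate — including a coordinatewise version — follows straight from the quasi property: $\norm{x_i^\ell - x_i^*}_i = \norm{\T_i\x^{\ell-1} - x_i^*}_i \le \zeta\norm{x_i^{\ell-1}-x_i^*}_i$, so $\norm{x_i^\ell - x_i^*}_i \le \zeta^\ell\norm{x_i^0-x_i^*}_i \to 0$, and squaring and summing over $i$ gives $\norm{\x^\ell-\x^*} \le \zeta^\ell\norm{\x^0-\x^*}$.

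The main obstacle — and the point I would be careful about — is that quasi-contractivity as stated in \cref{def:lipschitz-operators} is a condition relative to $\fix(\T)$ only, and by itself does \emph{not} imply $\fix(\T) \ne \emptyset$ (a translation on $\R$ is vacuously quasi-contractive yet has no fixed point). So the existence half genuinely needs either the stronger component-wise contraction between arbitrary points used in the Cauchy argument above, or a restriction of $\T$ to an invariant nonempty bounded closed convex subset together with \cref{lem:browder-theorem}; after that, the proof is routine, the only remaining bookkeeping being completeness of $\Hs$, the geometric-series estimate, and the coordinatewise refinement of the rate. One could alternatively just invoke \cite[Theorem~1.50]{bauschke_convex_2017} for the single-space Banach-Picard theorem and append the coordinatewise observation, but I prefer spelling the argument out, since the separable structure is precisely what is being set up here.
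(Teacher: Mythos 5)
The paper offers no proof of this lemma at all---it simply delegates both existence/uniqueness and convergence to \cite[Theorem~1.50]{bauschke_convex_2017}, the classical Banach contraction principle---so your write-up is essentially a reconstruction of the proof of the cited theorem adapted to the component-wise norms, not a departure from anything in the paper. The uniqueness argument and the coordinatewise rate $\norm{x_i^\ell - x_i^*}_i \le \zeta^\ell \norm{x_i^0 - x_i^*}_i$ are correct and use only the quasi-contractive inequality of \cref{def:lipschitz-operators}, which is exactly the right amount of hypothesis for those two claims.

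The one substantive point is the existence step, and you have diagnosed it correctly: quasi-contractivity constrains only pairs $(\x,\y)$ with $\y \in \fix(\T)$, so it is vacuous when $\fix(\T) = \emptyset$ (the paper itself makes this observation for quasi-non-expansive operators right after \cref{lem:fixed-set-properties}), and a translation shows the lemma is literally false as stated. Your Cauchy argument silently upgrades the hypothesis to the component-wise contraction $\norm{\T_i \x - \T_i \y}_i \le \zeta \norm{x_i - y_i}_i$ between \emph{arbitrary} points---which is what \cite[Theorem~1.50]{bauschke_convex_2017} actually requires---and you flag this explicitly, which is the right call. So there is no gap in your reasoning, only in the lemma's statement: under its literal hypotheses one can prove "at most one fixed point, and coordinatewise geometric convergence to it whenever it exists," while the existence half needs either the non-quasi contraction or an a priori guarantee that $\fix(\T) \neq \emptyset$ (e.g.\ via an invariant bounded closed convex domain and \cref{lem:browder-theorem}).
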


If we want to compute a fixed point of the non-expansive operator $\T$, the following results can be used.

\begin{lemma}
Let $\T : \Hs \to \Hs$ be a non-expansive operator with $\fix(\T) \neq \emptyset$. Then for any $\alpha \in (0, 1)$, the $\alpha$-averaged operator $\T_\alpha = (1-\alpha) \I + \alpha \T$ is such that $\fix\left( \T_\alpha \right) = \fix(\T)$.
\end{lemma}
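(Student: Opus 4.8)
The plan is to prove the set equality $\fix\!\left((1-\alpha)\I + \alpha\T\right) = \fix(\T)$ directly, via a short chain of equivalences, rather than invoking any deep machinery. Write $\mathcal{S} := (1-\alpha)\I + \alpha\T$; since $\T$ maps $\Hs$ into $\Hs$ and $\alpha \in (0,1)$, the operator $\mathcal{S}$ is well defined from $\Hs$ to $\Hs$, with $i$-th component $(\mathcal{S}\x)_i = (1-\alpha)x_i + \alpha\T_i\x$ for $i = 1, \ldots, n$.

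First I would fix an arbitrary $\x \in \Hs$ and observe that $\x \in \fix(\mathcal{S})$ means $x_i = (1-\alpha)x_i + \alpha\T_i\x$ for every $i = 1, \ldots, n$, which rearranges to $\alpha(\T_i\x - x_i) = 0$; since $\alpha > 0$, this is equivalent to $x_i = \T_i\x$ for every $i$, i.e., to $\x \in \fix(\T)$. Both inclusions then follow at once. Equivalently, in vector form one has $\mathcal{S}\x - \x = \alpha(\T\x - \x)$, so $\mathcal{S}\x = \x$ if and only if $\T\x = \x$.

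Finally I would remark that the hypotheses---quasi-non-expansiveness of $\T$ and $\fix(\T) \neq \emptyset$---are not actually used in establishing the equality itself; they serve only to make the statement non-vacuous (the common fixed set is then nonempty) and, via \cref{lem:fixed-set-properties}, closed and convex. The only point requiring any care is that the relaxation $(1-\alpha)\I + \alpha\T$ must be interpreted componentwise consistently with the separable structure $\T\x = (\T_1\x, \ldots, \T_n\x)$; once that is settled, the argument reduces to the one-line identity above and presents no real obstacle.
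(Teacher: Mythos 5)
Your proof is correct and takes essentially the same elementary route as the paper: a direct algebraic manipulation of the fixed-point identity $\mathcal{S}\x - \x = \alpha(\T\x - \x)$. In fact your chain of equivalences is slightly more complete, since the paper's own proof only writes out the inclusion $\fix(\T) \subseteq \fix\left((1-\alpha)\I + \alpha\T\right)$ and leaves the (equally trivial) converse implicit, and your observation that quasi-non-expansiveness and non-emptiness of $\fix(\T)$ play no role in the equality itself is accurate.
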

\begin{proof}
Let $\bar{\x} \in \fix(\T)$, by definition $\bar{\x} = \T \bar{\x}$. Therefore given $\alpha \in (0, 1)$ we have
$
	(1 - \alpha) \bar{\x} + \alpha \T \bar{\x} = \bar{\x} - \alpha \bar{\x} + \alpha \bar{\x} = \bar{\x}.
$
\end{proof}

\begin{lemma}[\km theorem]\label{lem:krasnoselskii-mann}
Let $\D \subset \Hs$ be a non-empty closed convex subset, and let $\T : \D \to \D$ be a non-expansive operator (notice that by \cref{lem:browder-theorem} $\fix(\T) \neq \emptyset$). Let $\alpha \in (0, 1)$, then the \km iteration
$$
	\x^{\ell+1} = (1-\alpha) \x^\ell + \alpha \T \x^\ell, \quad \ell \in \N,
$$
guarantees that $\norm{(\I - \T) \x^\ell} \to 0$ as $\ell \to \infty$ \cite[Theorem~5.15]{bauschke_convex_2017}.
\end{lemma}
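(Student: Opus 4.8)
The plan is to give a short self-contained argument. The statement is \cite[Theorem~5.15]{bauschke_convex_2017}, but in the separable setting it is cleanest to redo it coordinate-by-coordinate so that \cref{def:lipschitz-operators} is used exactly as stated. By the preceding lemma, $\fix(\T_\alpha) = \fix(\T)$, so I would fix some $\x^* \in \fix(\T)$; since $\D$ is convex and $\T$ maps $\D$ into itself, $\T_\alpha = (1-\alpha)\I + \alpha\T$ also maps $\D$ into itself, so the iterates $\x^\ell$ are well defined and stay in $\D$. The first step is a one-step energy estimate for $\T_\alpha$ relative to $\x^*$.

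Fix $\ell$ and a coordinate $i$. Writing $x_i^{\ell+1} - x_i^* = (1-\alpha)(x_i^\ell - x_i^*) + \alpha(\T_i\x^\ell - x_i^*)$ and using the identity $\norm{(1-\alpha)u + \alpha v}_i^2 = (1-\alpha)\norm{u}_i^2 + \alpha\norm{v}_i^2 - \alpha(1-\alpha)\norm{u - v}_i^2$ with $u = x_i^\ell - x_i^*$ and $v = \T_i\x^\ell - x_i^*$ (so that $u - v = (\I_i - \T_i)\x^\ell$ because $x_i^* = \T_i\x^*$), one gets
$$\norm{x_i^{\ell+1} - x_i^*}_i^2 = (1-\alpha)\norm{x_i^\ell - x_i^*}_i^2 + \alpha\norm{\T_i\x^\ell - x_i^*}_i^2 - \alpha(1-\alpha)\norm{(\I_i - \T_i)\x^\ell}_i^2 .$$
Quasi-non-expansiveness (\cref{def:lipschitz-operators} with $\zeta = 1$, applied at $\y = \x^*$) gives $\norm{\T_i\x^\ell - x_i^*}_i \leq \norm{x_i^\ell - x_i^*}_i$, so the first two terms collapse to $\norm{x_i^\ell - x_i^*}_i^2$ and
$$\norm{x_i^{\ell+1} - x_i^*}_i^2 \leq \norm{x_i^\ell - x_i^*}_i^2 - \alpha(1-\alpha)\norm{(\I_i - \T_i)\x^\ell}_i^2 .$$
Since the $i$-th component of $(\I - \T_\alpha)\x$ equals $\alpha(\I_i - \T_i)\x$, this inequality is exactly the statement that $\T_\alpha$ is $\alpha$-quasi-averaged in the sense of \cref{def:averaged-operators}.

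Summing over $i = 1, \ldots, n$ and using $\norm{\cdot}^2 = \sum_{i=1}^n \norm{\cdot}_i^2$ on the direct sum $\Hs$ gives the Fej\'er-type inequality $\norm{\x^{\ell+1} - \x^*}^2 \leq \norm{\x^\ell - \x^*}^2 - \alpha(1-\alpha)\norm{(\I - \T)\x^\ell}^2$. Telescoping from $\ell = 0$ to $L$ and dropping the nonnegative term $\norm{\x^{L+1} - \x^*}^2$ yields $\alpha(1-\alpha)\sum_{\ell = 0}^{L}\norm{(\I - \T)\x^\ell}^2 \leq \norm{\x^0 - \x^*}^2$ for every $L$; since $\alpha(1-\alpha) > 0$, the series $\sum_{\ell \geq 0}\norm{(\I - \T)\x^\ell}^2$ has bounded partial sums, hence converges, and therefore $\norm{(\I - \T)\x^\ell} \to 0$, which is the claim.

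I do not expect a genuine obstacle. The only point deserving care is that the component-wise form of quasi-non-expansiveness in \cref{def:lipschitz-operators} suffices; it does, precisely because the squared norm on $\Hs$ splits additively over coordinates, so the per-coordinate estimates aggregate exactly into the global telescoping bound one expects from the classical \km analysis. Equivalently, one could simply invoke \cite[Theorem~5.15]{bauschke_convex_2017} after observing that component-wise quasi-non-expansiveness implies $\T$ is quasi-non-expansive on all of $\Hs$, but the computation above is short enough to include directly.
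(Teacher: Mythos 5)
Your argument is correct. Note, however, that the paper does not prove \cref{lem:krasnoselskii-mann} at all: it simply cites \cite[Theorem~5.15]{bauschke_convex_2017}, so your self-contained derivation is a genuinely different (and more informative) route. What your computation buys is twofold. First, it verifies explicitly that the \emph{component-wise} notion of quasi-non-expansiveness in \cref{def:lipschitz-operators} is enough: the identity $\norm{(1-\alpha)u+\alpha v}_i^2=(1-\alpha)\norm{u}_i^2+\alpha\norm{v}_i^2-\alpha(1-\alpha)\norm{u-v}_i^2$ plus $\norm{\T_i\x^\ell-x_i^*}_i\le\norm{x_i^\ell-x_i^*}_i$ gives the per-coordinate Fej\'er inequality, and summing over $i$ recovers the global one, so the citation is legitimately applicable after this (small but nontrivial) aggregation step that the paper leaves implicit. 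Second, your intermediate inequality is exactly the statement that $\T_\alpha$ is $\alpha$-quasi-averaged in the sense of \cref{def:averaged-operators} (since the $i$-th component of $(\I-\T_\alpha)\x$ is $\alpha(\I_i-\T_i)\x$), which makes transparent the paper's remark that the \km iteration is the Banach--Picard iteration applied to an averaged operator. A further point in favor of reproving rather than citing: the classical statement of \cite[Theorem~5.15]{bauschke_convex_2017} is formulated for non-expansive operators, whereas the lemma here assumes only quasi-non-expansiveness; your telescoping argument shows the conclusion $\norm{(\I-\T)\x^\ell}\to 0$ indeed needs only the quasi version. The only cosmetic slip is the parenthetical ``because $x_i^*=\T_i\x^*$'': the cancellation $u-v=x_i^\ell-\T_i\x^\ell$ is pure algebra and does not use the fixed-point property; that property is used only in the quasi-non-expansiveness step. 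This does not affect correctness.
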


In the remainder of the paper we will thus focus on studying the convergence of the Banach-Picard in two cases: i) when the operator is contractive, and ii) when it is averaged.

\subsection{Probability}\label{subsec:background-probability}
In this section, we provide some definitions and results in probability theory that will be used in the paper to derive convergence results in high-probability. Throughout the paper, the underlying probability space will be $(\Omega, \mathcal{F}, \mathbb{P})$.
We start by introducing the definition of \textit{sub-Weibull random variables} \cite{wong_lasso_2020,vladimirova_subweibull_2020,zhang_concentration_2021,kuchibhotla_moving_2022}.

\begin{definition}[Sub-Weibull random variable]\label{def:sub-weibull}
A random variable $x$ is said to be sub-Weibull if there exist $\theta \geq 0$, $\nu > 0$ such that
$$
	\norm{x}_k = \ev{|x|^k}^{1/k} \leq \nu k^\theta, \quad \forall k \geq 1,
$$
and we denote it by $x \sim \sw(\theta, \nu)$.
\end{definition}

The following equivalence result relates the moments growth condition of \cref{def:sub-weibull} with a bound for the tail probability \cite[Lemma~5]{wong_lasso_2020}.

\begin{lemma}[Sub-Weibull tail probability]\label{lem:equivalent-sub-weibull}
Let $x \sim \sw(\theta, \nu)$, then the tail probability verifies the bound
\begin{equation}\label{eq:tail-probability}
	\pr{|x| \geq \epsilon} \leq 2 \exp\left( - \left( \frac{\epsilon}{c(\theta) \nu} \right)^{1 / \theta} \right), \quad \forall \,\, \epsilon > 0,
\end{equation}
where $c(\theta) := (2 e / \theta)^\theta$.
\end{lemma}

\cref{lem:equivalent-sub-weibull} shows that the tails of a sub-Weibull r.v. become heavier as the parameter $\theta$ grows larger. Moreover, setting $\theta = 1/2$ and $\theta = 1$ yields the class of sub-Gaussians and sub-exponential random variables, respectively; see, \emph{e.g.}, \cite{boucheron_concentration_2013,vershynin_high-dimensional_2018}. 
The following lemma shows how the tail probability equation~\cref{eq:tail-probability} can be used to give high probability bounds.

\begin{lemma}[High probability bound]\label{lem:high-probability-bound}
Let $x \sim \sw(\theta, \nu)$, then for any $\delta \in (0, 1)$, w.p. $1 - \delta$ we have the bound:
$$
    |x| \leq \log^\theta(2 / \delta) c(\theta) \ \nu.
$$
\end{lemma}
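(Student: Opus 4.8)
The plan is to combine the tail bound from \cref{pr:equivalent-sub-weibull}~(i) with the conversion constant $c(\theta)$ and then choose the threshold $\epsilon$ so that the right-hand side of the tail bound equals $\delta$. Concretely, since $x \sim \sw(\theta,\nu)$ in the sense of property~(ii) with parameter $\nu$, the cited \cite[Lemma~5]{wong_lasso_2020} gives that property~(i) holds with $\nu_1 = c(\theta)\,\nu$, i.e.,
\begin{equation*}
	\pr{|x| \geq \epsilon} \leq 2 \exp\left( - \left( \frac{\epsilon}{c(\theta)\,\nu} \right)^{1/\theta} \right), \quad \forall \epsilon > 0.
\end{equation*}

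First I would set $\epsilon = \epsilon(\delta) := \nu\, c(\theta)\, \log^\theta(2/\delta)$. Substituting into the exponent, $\bigl(\epsilon/(c(\theta)\nu)\bigr)^{1/\theta} = \bigl(\log^\theta(2/\delta)\bigr)^{1/\theta} = \log(2/\delta)$, so the right-hand side becomes $2\exp(-\log(2/\delta)) = 2 \cdot \delta/2 = \delta$. Hence $\pr{|x| \geq \epsilon(\delta)} \leq \delta$, which is exactly $\pr{|x| \leq \nu\, c(\theta)\, \log^\theta(2/\delta)} \geq 1 - \delta$, as claimed. One should note that for $\delta \in (0,1)$ we have $2/\delta > 1$ so $\log(2/\delta) > 0$ and the threshold is well-defined and positive; the edge case $\theta = 0$ is handled by interpreting $\log^0(2/\delta) = 1$ (consistent with the degenerate sub-Weibull being almost surely bounded by $\nu$).

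There is no real obstacle here — the result is essentially a direct inversion of the tail bound. The only point requiring minor care is bookkeeping the constant: the statement uses the property~(ii) parameter $\nu$ for the hypothesis but the property~(i) tail bound in the proof, so one must insert the factor $c(\theta) = (2e/\theta)^\theta$ when passing between the two, which is precisely why $c(\theta)$ appears in the final bound. Everything else is a one-line substitution.
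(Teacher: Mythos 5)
Your proof is correct and follows essentially the same route as the paper's: convert the moment-based definition (ii) to the tail bound (i) with parameter $c(\theta)\nu$, then set the right-hand side equal to $\delta$ and solve for $\epsilon$. The only difference is that you spell out the substitution and the positivity of $\log(2/\delta)$ explicitly, which the paper leaves implicit.
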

\begin{proof}
By \cref{lem:equivalent-sub-weibull} we have, for any $\epsilon > 0$:
$$
    \pr{|x| \geq \epsilon} \leq 2 \exp\left( - \left( {\epsilon} / {(c(\theta) \nu)} \right)^{1 / \theta} \right).
$$
Setting the right-hand side equal to $\delta$ and solving for $\epsilon$ we get $\epsilon = c(\theta) \nu \log^\theta(2 / \delta)$ which implies that, w.p. $1 - \delta$ we have $|x| \leq \epsilon = c(\theta) \nu \log^\theta(2 / \delta)$.
\end{proof}

We characterize now the properties of the sub-Weibull class of random variables.

\begin{lemma}[Inclusion]\label{lem:inclusion-sub-weibull}
Let $x \sim \sw(\theta, \nu)$, then $x \sim \sw(\theta', \nu')$ for any $\theta' \geq \theta$, $\nu' \geq \nu$.
\end{lemma}
\begin{proof}
By assumption we have $\norm{x}_k \leq \nu k^\theta$. Using the fact that $\nu k^\theta \leq \nu' k^{\theta'}$ (which holds since $k \geq 1$) yields the thesis; cf. \cite[Proposition~1]{vladimirova_subweibull_2020}.
\end{proof}

\begin{lemma}[Closure]\label{lem:closure-sub-weibull-class}
The class of sub-Weibull random variables is closed w.r.t. product by a scalar, sum, product, and exponentiation, according to the following rules.
\begin{enumerate}	
	\item \emph{Product by scalar:} let $x \sim \sw(\theta, \nu)$ and $a \in \R$, then $a x \sim \sw(\theta, |a| \nu)$;
	
	\item \emph{Sum:} let $x_i \sim \sw(\theta_i, \nu_i)$, $i \in \{ 1, 2 \}$, possibly dependent, then $x_1 + x_2 \sim \sw(\max\{ \theta_1, \theta_2 \}, \nu_1 + \nu_2)$;
	
	\item \emph{Product:} let $x_i \sim \sw(\theta_i, \nu_i)$, $i \in \{ 1, 2 \}$, and independent, then $x_1 x_2 \sim \sw(\theta_1 + \theta_2, \nu_1 \nu_2)$.
	
	\item \emph{Power:} let $x \sim \sw(\theta, \nu)$ and $a > 0$, then $x^a \sim \sw(a \theta, \nu^a \max\{ 1, a^{a \theta} \})$.
\end{enumerate}
\end{lemma}
\begin{proof} 
See \cref{proof:lem:closure-sub-weibull-class}.
\end{proof}

\smallskip

We conclude with some remarks.

\begin{remark}[Square of sub-Weibull]\label{rem:square-sub-weibull}
A consequence of 4) in \cref{lem:closure-sub-weibull-class} is that the square of $x \sim \sw(\theta, \nu)$ is itself a sub-Weibull, characterized by $x^2 \sim \sw(2 \theta, 4^\theta \nu^2)$. A particular case is the well known fact that the square of a sub-Gaussian ($\theta = 1/2$) is sub-exponential ($\theta = 1$), see \emph{e.g.} \cite[Lemma~2.7.6]{vershynin_high-dimensional_2018}.
\end{remark}

\begin{remark}[Mean of sub-Weibulls]\label{rem:mean-sub-weibull}
Notice that the definition of sub-Weibulls and their properties does not require that their mean be zero.
Moreover, if $x \sim \sw(\theta, \nu)$ and $x \geq 0$ almost surely, then $\ev{x} \leq \nu$, since $\norm{x}_1 = \ev{x}$.
\end{remark}

\begin{remark}[Bounded r.v.s]\label{rem:bounded-rv}
We can see that bounded r.v.s are sub-Weibull with $\theta = 0$; indeed, let $x$ be a r.v. such that a.s. $|x| \leq b$, then $\norm{x}_k \leq b = b k^0$, $k \geq 1$. This characterization is ``optimal'' in terms of $\theta$, since $\theta = 0$ corresponds to the lightest possible tail. However, it is sub-optimal in the other parameter, $\nu$, which does not reflect the overall distribution of $x$, only its maximum absolute value.
Alternatively, we know by \cite[p.~24]{vershynin_high-dimensional_2018} that the class of sub-Gaussian random variables includes that of bounded r.v.s; this implies that bounded r.v.s are sub-Weibull with $\theta = 1/2$.
\end{remark}

\section{Motivation and Framework Development}\label{sec:framework}
We start this section by discussing a motivating example in the context of \textit{federated learning} \cite{li_federated_2020,gafni_federated_2022}, around which we will formalize our stochastic operator framework.

\subsection{Motivating example: federated learning}\label{subsec:motivating-ex}

\begin{figure}[!ht]
    \centering
    \includegraphics[scale=0.8]{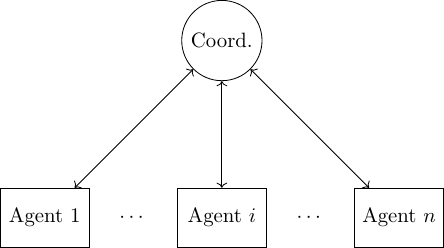}
    \caption{The architecture in federated learning.}
    \label{fig:fl-architecture}
\end{figure}

In \textit{federated learning}, a set of $n$ agents, aided by a coordinator, aim to cooperatively train a model without directly sharing the data they store \cite{li_federated_2020,gafni_federated_2022}. In order to keep the data private, the agents share the results of local training with the coordinator, which aggregates them into a more accurate model.
Formally, the goal is to solve the optimization problem
\begin{equation}\label{eq:federated-problem}
    \min_{x_i \in \R^q} \sum_{i = 1}^n f_i(x_i) \quad \text{s.t.} \ x_1 = x_2 = \ldots = x_n
\end{equation}
where $f_i : \R^q \to \R \cup \{ +\infty \}$ is the loss function of agent $i$, defined over the local dataset. Let $\{ d_i^h \}_{h = 1}^{m_i}$ be the $m_i$ data points stored by $i$, then usually the local loss is of the form
\begin{equation}\label{eq:local-loss}
    f_i(x_i) = \sum_{h = 1}^{m_i} \lambda(x_i; d_i^h)
\end{equation}
with $\lambda$ being a training loss (\textit{e.g.} quadratic or logistic).

In principle, \cref{eq:federated-problem} could be solved via projected gradient descent by applying, $i \in \{ 1, \ldots, n \}$:
\begin{equation}\label{eq:naive-federated-algorithm}
	x_i^{\ell+1} = \T_i x^\ell := \frac{1}{n} \sum_{j = 1}^n \left( x_j^\ell - \gamma \nabla f_j(x_j^\ell) \right), \quad \ell \in \N,
\end{equation}
where the agents compute local gradient descent steps, and the coordinator computes their average (the projection onto $x_1 = \ldots x_n$) and transmits it to the agents. In the following, using the notation defined in \cref{subsec:operators} we write~\cref{eq:naive-federated-algorithm} as $\x^{\ell+1} = \T \x^\ell$.
However, when solving learning problems there are several practical constraints that make implementing~\cref{eq:naive-federated-algorithm} unrealistic. In the following we discuss some of these practical challenges, and how they motivate the theoretical developments of subsequent sections. The section concludes by presenting a modified version of~\cref{eq:naive-federated-algorithm} that accounts for these challenges, and which can be interpreted as a stochastic operator, fitting in the framework of \cref{subsec:framework}.

\subsubsection*{Challenge 1: additive errors}
Learning problems are often \textit{high dimensional}, both in the size of the unknown $x \in \R^q$ (especially when training neural networks), and in the size of the local data sets.
The size of the unknown poses a first challenge, since algorithm~\cref{eq:naive-federated-algorithm} requires sharing of $q$-dimensional vectors, and the larger $q$ the more expensive these communications become. In practice, then, \textit{quantization or compression} is applied before the agents communicate with the coordinator \cite{zhao_towards_2023}, lessening the burden at the cost of introducing some error.

A second challenge arises when computing the gradient of local losses~\cref{eq:local-loss}, since the larger $m_i$ is the more computationally expensive gradient computations are. To reduce the cost of gradient evaluations, a standard strategy in learning is to use \textit{stochastic gradients} \cite{yuan_federated_2020}, which approximate the true gradient using only a random subset of the local data points. But, similarly to communications reduction techniques, approximating local gradients introduces some error. In the following, we denote by $\hat{\nabla} f_i(x_i)$ a stochastic local gradient.

We are therefore interested in analyzing the modified version of~\cref{eq:naive-federated-algorithm} given by
$
	\x^{\ell+1} = \T \x^\ell + \e^\ell,
$
where $\e^\ell$ is a random vector modeling the errors introduced by the use of inexact communications and stochastic gradients.

\begin{remark}[Local training]
In practice, federated learning algorithm use \textit{local training}, that is, the agents perform multiple local gradient steps in between communication rounds. While this reduces the ratio of communications to gradient evaluations, it introduces \textit{client drift} \cite{karimireddy_scaffold_2020}; interpreting it as an additive error, one may then apply the results of the next sections to analyze the convergence of algorithms employing local training.
\end{remark}

\subsubsection*{Challenge 2: asynchrony}
The agents cooperating in the learning procedure may be \textit{highly heterogeneous}, in that they have different computational resources \cite{gafni_federated_2022}. In~\cref{eq:naive-federated-algorithm} all the agents compute a local gradient at the same time -- however, heterogeneous agents will have different computation times. Therefore, requiring the agents to work synchronously implies that the faster agents will sit idle while waiting for the slower ones to conclude the local computations \cite{peng_coordinate_2016}.

One way to solve this problem is to allow \textit{asynchronous} activation of the agents, so that each agent is free to perform local training at its own pace.
More formally, let $\Is^\ell \subset \{ 1, \ldots, n \}$, $|\Is^\ell| \leq n$, be the subset of agents that concluded a local computation at time $\ell \in \N$. Then we are interested in algorithms in which the coordinator only aggregates information from $\Is^\ell$, instead of all agents.
\Cref{alg:federated-gradient} presents a modified version of~\cref{eq:naive-federated-algorithm} that allows for asynchronous activations (see \cref{app:derivation-fl-algorithm} for the derivation).

We also remark that asynchrony could be enforced by design as an additional measure to reduce the amount of communications required at each time.

\subsubsection*{Challenge 3: online problems}
The problem~\cref{eq:federated-problem} discussed so far is static, in the sense that the data sets defining the local losses do not change over time. However, in many learning applications the agents may be continuously collecting new data and consequently modifying their loss.
This results in an \textit{online learning} problem, characterized by \cite{shalev-shwartz_online_2011,dallanese_convergence_2019,simonetto_time-varying_2020}
$$
    \min_{x_i \in \R^q} \sum_{i = 1}^n f_i^\ell(x_i) \quad \text{s.t.} \ x_1 = x_2 = \ldots = x_n.
$$
In this set-up the solution(s) to the problem at time $\ell \in \N$ may not coincide with those at time $\ell+1$, and our analysis of \cref{alg:federated-gradient}'s performance needs to account for this fact.

\begin{algorithm}[!ht]
\caption{Federated gradient descent.}
\label{alg:federated-gradient}
\begin{algorithmic}[1]
\Require{Initial conditions $z^0$, $x_i^0$, and step-size $\gamma$.}
\For{$\ell = 0, 1, 2, \ldots$}
	\Statex\hspace{1.5em}{\color{blue}// compressed communications}
	\State the coordinator compresses and transmits $z^\ell$ to the active agents $\Is^\ell$
	
	\For{$i \in \{ 1, \ldots, n \}$}
		\Statex\hspace{3em}{\color{blue}// asynchrony}
		\If{$i \in \Is^\ell$}
			\Statex\hspace{4.5em}{\color{blue}// inexact gradients}
			\State the agent performs a local (stochastic) gradient step
			$$
				x_i^{\ell+1} = z^\ell - \gamma \hat{\nabla} f_i(z^\ell)
			$$
			\Statex\hspace{4.5em}{\color{blue}// compressed communications}
			\State and compresses and transmits $x_i^{\ell+1}$ to the coordinator
		\Else
			\State otherwise sets $x_i^{\ell+1} = x_i^\ell$
		\EndIf
	\EndFor
	
	\State the coordinator computes
	$$
		z^{\ell+1} = \frac{1}{n} \left( \sum_{i \in \Is^\ell} x_i^{\ell+1} + \sum_{i \not\in \Is^\ell} x_i^\ell \right)
	$$
\EndFor
\end{algorithmic}
\end{algorithm}

\subsection{Stochastic operator framework}\label{subsec:framework}

Motivated by the example discussed so far, we are now ready to formalize the stochastic operator framework of interest in this paper.
Consider an operator $\T \x = (\T_1 \x, \ldots, \T_n \x)$, in the following we analyze the convergence of the update, for $i \in \{ 1, \ldots, n \}$:
\begin{align}
	x_i^{\ell+1} &= \begin{cases}
		\T_i^{\ell+1} \x^\ell + e_i^\ell & \text{w.p.} \ p_i \\
		x_i^\ell & \text{w.p.} \ 1 - p_i
	\end{cases}\label{eq:stochastic-update} \\
	&= (1 - u_i^\ell) x_i^\ell + u_i^\ell \left( \T_i^{\ell+1} \x^\ell + e_i^\ell \right) \nonumber \\
	&=: \hat{\T}_i^{\ell+1} \x^\ell + u_i^\ell e_i^\ell \nonumber
\end{align}
where $\ell \in \N$ is the iteration index, $u_i^\ell$ are Bernoulli random variables that indicate whether a coordinate is updated or not at iteration $\ell$, and $\e^\ell = (e_1^\ell, \ldots, e_n^\ell)$ is a random vector of additive noise.
As discussed in \cref{subsec:motivating-ex}, in federated learning only the coordinates corresponding to active nodes (with $u_i^\ell$) are updated, and additive noise may be due to inexact communications or stochastic gradients (hence $e_i^\ell$). The operator $\T^\ell$ is allowed to change over time to account for online learning problems defined by streaming sources of data.

\smallskip

In the following we introduce and discuss some assumptions to formalize the framework. We remark that throughout the paper, the initial condition $\x^0 \in \Hs$ will be assumed to be deterministic.

\begin{assumption}[Stochastic framework 1]\label{as:stochastic-framework}
The following is assumed. 
\begin{itemize}

    \item[(i)] $\uv^\ell = (u_1^\ell, \ldots, u_n^\ell)$ is a $\{ 0, 1 \}^n$-valued random vector with $p_i := \pr{u_i = 1} > 0$, $i \in \{ 1, \ldots, n \}$.
    
    \item[(ii)] The additive error $\e^\ell \in \Hs$ is an $\Hs$-valued random vector such that $\ev{\norm{\e^\ell}^2} \leq \nu^2$.
    
    \item[(iii)] The random processes $\{ \uv^\ell \}_{\ell \in \N}$ and $\{ \e^\ell \}_{\ell \in \N}$ are i.i.d. and independent of each other.
\end{itemize}
\end{assumption}

We provide now two assumptions on the operator defining~\eqref{eq:stochastic-update}, which will be used to provide two different sets of results. Notice that these assumptions apply to the underlying \textit{deterministic} operator $\T^{\ell+1}$ only.

\begin{assumption}[Contractive set-up]\label{as:contractive}
Consider an operator $\T^\ell : \D \to \D$. The following is assumed. 
\begin{itemize}
	\item[(i)] The operator $\T^\ell : \Hs \to \Hs$ is $\zeta$-contractive for all $\ell \in \N$, and we denote by $\bar{\x}^\ell$ its unique fixed point.
	
	\item[(ii)] There exists $\sigma \geq 0$ such that the fixed points of consecutive operators have a bounded distance
	$$
		\norm{\bar{\x}^\ell - \bar{\x}^{\ell-1}} \leq \sigma, \quad \forall \ell \in \N.
	$$
\end{itemize}
\end{assumption}

\begin{assumption}[Averaged set-up]\label{as:averaged}
Consider an operator $\T^\ell : \D \to \D$. The following is assumed. 
\begin{itemize}
	\item[(i)] The operator $\T^\ell : \D \to \D$ is $\alpha$-averaged for all $\ell \in \N$, with $\D \subset \Hs$ being convex and compact.
	
	\item[(ii)] There exists $\sigma \geq 0$ such that given $\bar{\x}^\ell \in \fix(\T^\ell)$, $\ell \in \N$, then
	$$
		\inf_{\bar{\x}^{\ell-1} \in \fix(\T^{\ell-1})} \norm{\bar{\x}^\ell - \bar{\x}^{\ell-1}} \leq \sigma.
	$$
\end{itemize}
\end{assumption}

In the following we denote by $\diam(\D)$ the diameter of $\D$.

\smallskip

Before moving on to the convergence analysis, we discuss the framework and the assumptions that characterize it.

\subsubsection{Update model}
We note that \cref{as:stochastic-framework}(i) does not require independence among the components of $\uv^\ell$. Independence is only assumed between $\uv^\ell$ and $\uv^h$, for any pair $\ell, h \in \N$, $\ell \neq h$.

Under \cref{as:stochastic-framework}(i) there may exist times during which \emph{none} of the coordinates are updated, because $u_i^\ell = 0$ for all $i \in \{ 1, \ldots, n \}$. This is different from the framework used in \emph{e.g.} \cite{combettes_stochastic_2015,combettes_stochastic_2019}, where at least one $u_i^\ell$ must always be different from zero. Accounting for such events is important in online scenarios, where the problem changes (\textit{e.g.} due to changes in the environment) even when updates are not performed.

\subsubsection{Additive error model}
The error model characterized by \cref{as:stochastic-framework}(ii) implies that the variance of the additive error is bounded.
This is a common assumption in learning applications, where the error $\e^\ell$ quantifies the difference between the gradient and a stochastic approximation thereof \cite{ghadimi_stochastic_2013,yuan_federated_2020}:
$$
    \e^\ell = \nabla f(\x^\ell) - \hat{\nabla} f(\x^\ell).
$$
Additionally, some quantization or compression errors also verify this assumption. This is the case for uniform or random quantization \cite{reisizadeh_exact_2019}; see also \cite{zhao_towards_2023} for a comprehensive overview.

We remark that this model implies that errors are \textit{persistently present} over time. This sets the paper apart from previous works on stochastic operators in which the additive error is assumed to decay to zero (or, equivalently, that the error is multiplied by a decaying parameter) \cite{combettes_stochastic_2015,combettes_stochastic_2019,berinde_iterative_2007}.
Finally, we note that \cref{as:stochastic-framework,as:stochastic-framework-2} do not require independence of the error components at a fixed time $\ell$, or between the errors drawn at different times $\ell$ and $h$. Finally, the errors could also be \emph{biased}, that is, they could have mean different from zero.

\subsubsection{Operators}
In \cref{as:averaged}, the operators are defined on a compact $\D$, which is verified when for example $\T^\ell$ contain a projection onto $\D$. On the one hand, this implies that Browder's theorem (cf. \cref{lem:browder-theorem}) holds, and hence that the operators have a non-empty fixed set. On the other, it allows us to guarantee that additive errors do not lead to divergence. This assumption is not required in \textit{e.g.} \cite{combettes_stochastic_2015} since the additive errors converge to zero asymptotically, and thus cannot lead to divergence.

In the context of convex optimization, we can relate \cref{as:contractive,as:averaged} to the properties of the cost function on which the operator $\T^\ell$ is defined. For example, in the federated learning set-up of \cref{subsec:motivating-ex} convex loss functions yield averaged operators, while strongly convex ones yield contractive operators \cite{davis_convergence_2016} (cf. also the discussion in \cref{app:derivation-fl-algorithm}).

Finally, a remark about the dynamic nature of the operators. As discussed in \cref{subsec:motivating-ex}, in many applications we may be interested in solving optimization problems that change over time, as new data come in \cite{dallanese_optimization_2020,simonetto_time-varying_2020}, which motivates our choice to analyze time-varying operators.
In this context, we can interpret~\cref{eq:stochastic-update} as an online algorithm, in which the output ($\x^\ell$) computed at time $\ell$ is used to warm-start the computation at time $\ell+1$. For this reason, it is necessary to provide bounds on the difference between consecutive operators (or equivalently, consecutive problems) to ensure that they are ``close enough'' for the warm-starting to provide an improvement in performance. This guarantee is provided by \cref{as:contractive}(ii) and \cref{as:averaged}(ii), which can be obtained for example bounding the rate of change of the operators.

\section{Mean Convergence Analysis}\label{sec:mean-convergence}
We start by characterizing in \cref{subsec:main-results-mean} the mean convergence of~\cref{eq:stochastic-update} for contractive and averaged operators. The results and their implications will be discussed in \cref{subsec:discussion-mean}, while \cref{subsec:corollaries-mean} will present some corollaries on the asymptotic convergence.

\subsection{Main results}\label{subsec:main-results-mean}

\begin{proposition}[Mean -- contractive operators]\label{pr:contractive}
Let \cref{as:stochastic-framework,as:contractive} hold, and let $\{ \x^\ell \}_{\ell \in \N}$ be the sequence generated by~\cref{eq:stochastic-update}. Then we have the following bound, for any $\ell \in \N$
$$
	\ev{\norm{\x^\ell - \bar{\x}^\ell}} \leq \sqrt{\frac{\pmax}{\pmin}} \left( \chi^\ell \norm{\x^0 - \bar{\x}^0} + \frac{1 - \chi^\ell}{1 - \chi} (\chi \sigma + \nu) \right)
$$
where $\pmax := \max_i p_i$, $\pmin := \min_i p_i$, and
$
	\chi := \sqrt{1 - \pmin + \pmin \zeta^2} \in (0, 1).
$
\end{proposition}
\begin{proof}
See \cref{proof:pr:contractive}.
\end{proof}

\begin{proposition}[Mean -- averaged operators]\label{pr:averaged}
Let \cref{as:stochastic-framework,as:averaged} hold, and let $\{ \x^\ell \}_{\ell \in \N}$ be the sequence generated by~\cref{eq:stochastic-update}. Then we have the following bound, for any $\ell \in \N$
\begin{align*}
	&\ev{\frac{1}{\ell+1} \sum_{h = 0}^\ell \norm{(\I - \T^{h+1}) \x^h}^2} \leq \frac{\alpha}{\pmin (1 - \alpha)} \times \\ & \times \left( \frac{1}{\ell+1} \norm{\x^0 - \bar{\x}^0}^2 + \sigma^2 + \nu^2 + 2 \diam(\D) (\sigma + \nu) \right)
\end{align*}
\end{proposition}
\begin{proof}
See \cref{proof:pr:averaged}.
\end{proof}

\subsection{Discussion}\label{subsec:discussion-mean}

\subsubsection{Difference from deterministic convergence}\label{subsec:difference-from-deterministic}

In the following we discuss the difference between \cref{pr:contractive,pr:averaged} and the convergence of the static and deterministic update $\x^{\ell+1} = \T \x^\ell$, $\ell \in \N$.

\paragraph{Contractive case}
By contractiveness we know that $\x^{\ell+1} = \T \x^\ell$ converges linearly to $\fix(\T) = \{ \bar{\x} \}$, that is $\norm{\x^\ell - \bar{\x}} \leq \zeta^\ell \norm{\x^0 - \bar{\x}}$ \cite[Theorem~1.50]{bauschke_convex_2017}.
Comparing this bound with that of \cref{pr:contractive} we notice first of all that the introduction of random coordinate updates \textit{degrades the convergence rate} -- in mean -- from $\zeta$ to $\chi \geq \zeta$, in accordance with the results of \cite{combettes_stochastic_2019}.

Secondly, the presence of additive errors implies that we do not reach exact convergence to the fixed point, but rather to a neighborhood thereof. Moreover, at any given time $\ell \in \N$, the additive error may cause the overall stochastic operator to be \textit{expansive} $\norm{\x^{\ell+1} - \bar{\x}} \geq \norm{\x^\ell - \bar{\x}}$, but the underlying contractiveness of $\T$ ensure that this lead to inexact convergence rather than divergence.

\paragraph{Averaged case}
The convergence of~\cref{eq:stochastic-update} for averaged operators can be proved only in terms of the \emph{cumulative fixed point residual} $\frac{1}{\ell+1} \sum_{h = 0}^\ell \norm{(\I - \T^{h+1}) \x^h}^2$, due to averagedness being weaker than contractiveness.
On the other hand, for the deterministic update $\x^{\ell+1} = \T \x^\ell$ it is possible to prove that $\{ \norm{(\I - \T) \x^\ell} \}_{\ell \in \N}$ is a monotonically decreasing sequence that converges to zero \cite[Theorem~5.15]{bauschke_convex_2017}.
Similarly to the contractive case, this is no longer the case in the presence of additive errors, and the metric used to analyze convergence needs to account for the overall evolution of the fixed point residual.

We remark that the concept of cumulative fixed point residual is similar to that of \emph{regret} in convex optimization, and in particular to that of dynamic regret in online optimization \cite{mokhtari_online_2016}. Moreover, it includes as a particular case the different concept of regret based on the residual of the proximal gradient method proposed in \cite{hallak_regret_2021}.

\subsubsection{Time-variability}
As mentioned in \cref{subsec:motivating-ex}, the stochastic framework we analyze allows for the operator to change over time, as this models \textit{online optimization algorithms} \cite{dallanese_optimization_2020,simonetto_time-varying_2020}.
But the time-variability of the operators, and hence of their fixed point(s), can be seen as a second source of additive errors besides $\e^\ell$. This error is always present, \textit{also when no update is performed}, to account for the fact that the environment changes at every iteration $\ell \in \N$. This is the case in online optimization when the problem depends on \textit{e.g.} measurements of a system, and the system will evolve even when no measurement is performed.

\paragraph{Path length}
In online optimization, a widely used concept is that of \emph{path-length}, that is, the cumulative distance between consecutive optimizers. This concept can be straightforwardly extended to the operator theoretical set-up, by defining it as the cumulative distance between consecutive fixed points:
$
	\sum_{h = 0}^{\ell-1} \norm{\bar{\x}^{h+1} - \bar{\x}^h} \leq \ell \sigma.
$
The path-length often appears in regret bounds, see \emph{e.g.} \cite{mokhtari_online_2016}, but notice that in the worst case it \emph{grows linearly with $\ell$}, and to carry out the convergence analysis the additional assumption that it grows sub-linearly is required.
What instead appears in our bound is a \emph{weighted} path-length
$$
	\sum_{h = 0}^{\ell-1} \chi^{\ell-h-1} \norm{\bar{\x}^{h+1} - \bar{\x}^h} \leq \frac{1 - \chi^\ell}{1 - \chi} \sigma
$$
which asymptotically reaches a fixed value, independent of $\ell$. This is due to the fact that we use the contractiveness of the operator, allowing to reach a tighter bound.

\subsubsection{Convergence without random coordinate updates}
We remark that the results of \cref{pr:contractive,pr:averaged} can be adapted in a straightforward manner to a scenario where no random coordinate updates are performed. Indeed, if $p_i = 1$, $i \in \{ 1, \ldots, n \}$ then the two results provide bounds to the convergence of \emph{inexact operators} modeled by
$$
	\x^{\ell+1} = \T \x^\ell + \e^\ell, \quad \ell \in \N.
$$
This class of stochastic updates includes several optimization methods such as \emph{stochastic gradient descent} and \emph{$0$-th order methods} \cite{dixit_online_2019,liu_primer_2020,bottou2018optimization,duchi_optimal_2015,nesterov_random_2017,berahas_theoretical_2022}, which are widely used in machine learning applications.

\Cref{subsec:corollaries-hp} will instead provide some convergence results when only random coordinate updates are present.

\subsection{Asymptotic convergence results}\label{subsec:corollaries-mean}
The mean convergence results of \cref{subsec:main-results-mean} can be further used to characterize the \textit{asymptotic, almost sure} convergence of~\eqref{eq:stochastic-update}, as proved in the following. The results are presented in the contractive case (cf. \cref{as:contractive}), but the same argument can be applied in the averaged case as well.

\subsubsection{Convergence to a neighborhood}
We start by showing that the output of~\cref{eq:stochastic-update} asymptotically and almost surely converges to a bounded neighborhood of the fixed point trajectory $\{ \bar{\x}^\ell \}_{\ell \in \N}$.

\begin{corollary}[Asymptotic a.s. convergence to neighborhood]\label{cor:almost-sure-neighborhood}
Let \cref{as:stochastic-framework,as:contractive} hold, and let $\{ \x^\ell \}_{\ell \in \N}$ be the sequence generated by~\cref{eq:stochastic-update}.
Then it holds that:
$$
	\limsup_{\ell \to \infty} \norm{\x^\ell - \bar{\x}^\ell} \leq \sqrt{\frac{\pmax}{\pmin}} \frac{\chi \sigma + \nu}{1 - \chi} \qquad \text{a.s.}.
$$
\end{corollary}
\begin{proof}
See \cref{proof:cor:almost-sure-neighborhood}.
\end{proof}

\subsubsection{Exact asymptotic convergence}
\cref{cor:almost-sure-neighborhood} shows that in general we can only prove asymptotic convergence to a neighborhood of the fixed point trajectory. The following result proves that a zero asymptotic error can be achieved under the assumptions that the operator is static and the additive error $\e^\ell$ vanishing.

\begin{corollary}[Exact a.s. convergence]\label{cor:almost-sure-exact}
Let \cref{as:stochastic-framework,as:contractive} hold, with the difference that
\begin{itemize}
	\item[(i)] the errors are vanishing, with $\ev{\norm{\e^\ell}^2} \leq (\nu^\ell)^2$ and $\lim_{\ell \to \infty} \nu^\ell = 0$;
	
	\item[(ii)] for all $\ell \in \N$, $\T^\ell = \T$; we denote by $\bar{\x}$ the unique fixed point of $\T$.
\end{itemize}
Let $\{ \x^\ell \}_{\ell \in \N}$ be the sequence generated by~\cref{eq:stochastic-update}, then
$$
	\limsup_{\ell \to \infty} \norm{\x^\ell - \bar{\x}} = 0 \qquad \text{a.s.}.
$$
\end{corollary}
\begin{proof}
See \cref{proof:cor:almost-sure-exact}.
\end{proof}

The result of \cref{cor:almost-sure-exact} is similar to \cite{combettes_stochastic_2015,combettes_stochastic_2019}, in which the additive errors are assumed to be vanishing (or to be multiplied by a vanishing parameter).

\smallskip

\cref{cor:almost-sure-neighborhood,cor:almost-sure-exact} leveraged Markov's inequality to characterize the almost sure behavior of~\cref{eq:stochastic-update} -- but these results hold only asymptotically, and do not characterize the transient.
The question then is: \textit{can we provide high probability bounds that hold for any $\ell \in \N$?}
A first observation is that almost sure convergence cannot be guaranteed during the transient, owing to the ever-present disturbance of the additive errors $\e^\ell$. The next section will instead provide transient bounds that hold with assigned probability.

\section{High Probability Convergence Analysis}\label{sec:high-probability}
In order to derive the high probability bounds for the transient, we introduce the following assumption, which models additive errors as sub-Weibull r.v.s.

\begin{assumption}[Stochastic framework 2]\label{as:stochastic-framework-2}
\cref{as:stochastic-framework}~(i)~and~(iii) hold, and moreover:
\begin{itemize}
    \item[(ii)] the additive error $\e^\ell \in \Hs$ is an $\Hs$-valued random vector such that its norm is sub-Weibull, that is $\norm{\e^\ell}^2 \sim \sw(\theta, \nu^2)$.
\end{itemize}
\end{assumption}

Sub-Weibull r.v.s can be used to model a broad range of additive errors that arise \textit{e.g.} in federated learning applications (cf. \cref{subsec:motivating-ex}).
For example, quantization and compression techniques that result in bounded additive errors can be modeled as sub-Weibulls as discussed in \cref{rem:bounded-rv}.

More importantly, a series of recent papers have highlighted how stochastic gradients present heavy tails in many learning applications \cite{gurbuzbalaban_heavy-tail_2021,zhu_beyond_2022,NEURIPS2020_abd1c782,NEURIPS2021_26901deb,nguyen_high_2023,jakovetic_nonlinear_2023}, especially involving the training of deep neural networks.
The same issues can therefore arise in a federated learning set-up, where each agent computes a local stochastic gradient \cite{gurbuzbalaban_heavy-tail_2022}.

\subsection{Contractive set-up}\label{subsec:main-results-hp}
In the following we will make use of the scaled norm $\nnorm{\x}^2 := \sum_{i = 1}^n \frac{1}{p_i} \norm{x_i}_i^2$ introduced in \cref{proof:pr:contractive}.

\begin{proposition}[High prob. -- contractive operators]\label{pr:contractive-hp}
Let \cref{as:stochastic-framework-2,as:contractive} hold, and assume that the random coordinate update operators $\hat{\T}^\ell$ defined in~\cref{eq:stochastic-update} satisfy
\begin{equation}\label{eq:as-contractive-hp}
    \nnorm{\hat{\T}^\ell \x - \bar{\x}^\ell} \leq \eta(\ell) \nnorm{\x - \bar{\x}^\ell}
\end{equation}
with $\eta(\ell)$ i.i.d., and there exist $\gamma, \chi \in (0, 1)$ such that
\begin{equation}\label{eq:as-contractive-hp-2}
    \prod_{h = \ell_1}^{\ell_2} \eta(h) \sim \sw\left( \gamma^{\ell_2 - \ell_1 + 1}, \chi^{\ell_2 - \ell_1 + 1} \right).
\end{equation}

Let $\{ \x^\ell \}_{\ell \in \N}$ be the sequence generated by~\cref{eq:stochastic-update}, then for any $\ell \in \N$, given $\delta \in (0, 1)$, with probability $1 - \delta$ the following bound holds:
\begin{align*}
	\norm{\x^\ell - \bar{\x}^\ell} &\leq \sqrt{\frac{\pmax}{\pmin}} \log^{\theta + 1}(2 / \delta) c(\theta + 1) \times \\
    &\times \left( \chi^\ell \norm{\x^0 - \bar{\x}} + \frac{1 - \chi^{\ell}}{1 - \chi} (\chi \sigma + \nu) \right).
\end{align*}
\end{proposition}
\begin{proof}
See \cref{proof:pr:contractive-hp}.
\end{proof}

\smallskip

The following sections discuss the assumptions required in \cref{pr:contractive-hp}, as well as the implications of such convergence result.

\subsubsection{\texorpdfstring{Assumption~\cref{eq:as-contractive-hp}}{Assumption~(10)}}\label{subsec:assumption-contractive}
In this section we discuss and motivate the use of assumption~(10) in \cref{pr:contractive-hp}.
We start by remarking that the assumption is stated in terms of the random coordinate updates operators $\hat{\T}^\ell$ defined in \cref{eq:stochastic-update}, and thus depends exclusively on $\{ u_i^\ell \}_{i = 1}^n$.
\begin{figure}[!ht]
    \centering
    \includegraphics[scale=0.6]{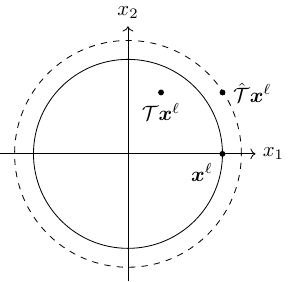}
    \caption{Random coordinate updates can lead to expansion.}
    \label{fig:expansion}
\end{figure}
In our setup, the underlying operators $\T^\ell$ are assumed to be contractive, but these random counterparts may not be, as exemplified in \cref{fig:expansion} where updating only the second coordinate leads to expansion. Assumption~(10) thus allows for the whole gamut of possibilities -- contraction, non-expansion, expansion -- since $\eta(\ell) \lesseqgtr 1$.

While an expansion is possible at any given time $\ell$, the results of \cref{sec:mean-convergence} motivate the second part of the assumption, \cref{eq:as-contractive-hp-2}.
First, the random operators $\hat{\T}^\ell$ are contractive in mean (w.r.t. the scaled norm $\nnorm{\cdot}$), since inspecting the proof of \cref{pr:contractive} in \cref{proof:pr:contractive} we see that (cf. \cref{eq:contractive-in-mean})
$$
	\ev{\nnorm{\hat{\T}^{\ell+1} \x^\ell - \bar{\x}^{\ell+1}}} \leq \chi \ev{\nnorm{\x^\ell - \bar{\x}^{\ell+1}}}
$$
with $\chi \in (0, 1)$. Thus by the discussion in \cref{rem:mean-sub-weibull} we expect the second sub-Weibull parameter of $\prod_{h = \ell_1}^{\ell_2} \eta(h)$ to be in $(0, 1)$.

Secondly, \cref{cor:almost-sure-exact} shows that the iterated application of a random coordinate update operator leads to almost sure convergence to the fixed point. This observation thus motivates the choice of a tail parameter for $\prod_{h = \ell_1}^{\ell_2} \eta(h)$ which decays as the number of iterations increases.

\subsubsection{Interpretation of high probability convergence results}
The high probability bound of \cref{pr:contractive-hp} states that, w.p. $1 - \delta$, the error satisfies:
\begin{align*}
	\norm{\x^\ell - \bar{\x}^\ell} &\leq \sqrt{\frac{\pmax}{\pmin}} \log^{\theta + 1}(2 / \delta) c(\theta + 1) \times \\
    &\times \left( \chi^\ell \norm{\x^0 - \bar{\x}} + \frac{1 - \chi^{\ell}}{1 - \chi} (\chi \sigma + \nu) \right).
\end{align*}

First of all, we can see that the right-hand side is multiplied by $\log^{\theta + 1}(2 / \delta)$, which grows as $\delta$ decreases. This implies that the more confidence we want in the bound (which requires $\delta$ to be smaller), the looser the bound becomes. Intuitively, smaller $\delta$ requires that we enlarge the bound to include more trajectory realizations $\{ \x^\ell \}_{\ell \in \N}$.
Similarly, if $\theta$ grows larger, then the bound becomes looser. This is a consequence of the fact that the larger $\theta$ is, the heavier the tails of the additive noise are.

We can further observe that the high probability bound -- multiplicative factors notwithstanding -- shares the structure of the mean bound of \cref{pr:contractive}.
Indeed, both bounds have a first term that depends on the initial condition and decays to zero as $\ell \to \infty$, and a second term that bounds the asymptotic distance from the fixed point, and which depends on the additive error and the time-variability of the operators.

\subsubsection{Sub-Weibull \emph{v.} Markov's inequality}
The fact that the dependence on $1 / \delta$ appears through its logarithm in the bound of \cref{pr:contractive-hp} is an important motivation for the use of the proposed sub-Weibull framework. This feature of the bounds ensures that the right-hand side grows relatively slowly when we ask for increasing confidence (that is, $\delta \to 0$).

Consider instead the following alternative high probability bound, which is based on Markov's inequality: with this approach, the dependence on the right-hand side is with $1 / \delta$ and not its logarithm. Although Markov's inequality holds for a more general class of random variables than sub-Weibull, this result makes it clear that using the sub-Weibull framework allows to derive sharper bounds, while still considering a number of relevant distributions as sub-cases. 

\begin{lemma}
Let \cref{as:stochastic-framework-2,as:contractive} hold, and let $\{ \x^\ell \}_{\ell \in \N}$ be the sequence generated by~\cref{eq:stochastic-update}. Then with probability $1 - \delta$, $\delta \in (0, 1)$, we have that
$$
	\norm{\x^\ell - \bar{\x}^\ell} \leq \frac{1}{\delta} \sqrt{\frac{\pmax}{\pmin}} \left( \chi^\ell \norm{\x^0 - \bar{\x}^0} + \frac{1 - \chi^\ell}{1 - \chi} (\chi \sigma + \nu) \right).
$$
\end{lemma}
\begin{proof}
By Markov's inequality we know that $\pr{\norm{\x^\ell - \bar{\x}^\ell} \geq \ev{\norm{\x^\ell - \bar{\x}^\ell} / \delta}} \leq \delta$. Using the bound of \cref{pr:contractive} then yields the thesis.
\end{proof}

\subsection{Averaged set-up}
In this section, we provide a convergence analysis for averaged operators. The treatment of this case requires a somewhat more restrictive assumption than~\cref{eq:as-contractive-hp}, owing to the fact that averagedness is a weaker property than contractiveness. \Cref{subsec:assumption-averaged} will discuss this assumption in detail.

\begin{proposition}[High prob. -- averaged operators]\label{pr:averaged-hp}
Let \cref{as:stochastic-framework-2,as:averaged} hold, and assume that the random coordinate update operators $\hat{\T}^{\ell+1}$ defined in~\cref{eq:stochastic-update} satisfy
\begin{equation}\label{eq:as-averaged-hp}
\begin{split}
    &\nnorm{\hat{\T}^{\ell+1} \x - \bar{\x}^{\ell+1}}^2 \\ &\leq \nnorm{\x - \bar{\x}^{\ell+1}}^2 - \frac{1 - \alpha(\ell)}{\alpha(\ell)} \nnorm{(\I - \T^{\ell+1}) \x}^2
\end{split}
\end{equation}
where $\alpha(\ell) \in (0, 1)$ almost surely.

Let $\{ \x^\ell \}_{\ell \in \N}$ be the sequence generated by~\cref{eq:stochastic-update}, then for any $\ell \in \N$, given $\delta \in (0, 1)$, with probability $1 - \delta$ the following bound holds:
\begin{align*}
	&\frac{1}{\ell+1} \sum_{h = 0}^\ell \norm{(\I - \T^{h+1}) \x^h}^2 \leq \log^{\theta}(2/\delta) c(\theta) \frac{\pmax}{\pmin} \times \\ &\times \frac{\bar{\alpha}}{1 - \bar{\alpha}} \left( \frac{\norm{\x^0 - \bar{\x}^0}^2}{\ell+1} + \sigma^2 + \nu^2 + 2 \sqrt{\pmin} \diam(\D) (\sigma + \nu) \right),
\end{align*}
where $\bar{\alpha} = \max_{h \in \{ 0, \ldots, \ell\}} \alpha(h)$.
\end{proposition}
\begin{proof}
See \cref{proof:pr:averaged-hp}.
\end{proof}

\subsubsection{\texorpdfstring{Assumption~\cref{eq:as-averaged-hp}}{Assumption~(12)}}\label{subsec:assumption-averaged}
As discussed in \cref{subsec:assumption-contractive}, the presence of random coordinate updates may cause expansion, and assumption~\cref{eq:as-averaged-hp} is introduced to exclude such behavior.
This is needed due to the weaker nature of averagedness. Indeed, we are currently not able to provide high probability bounds for the metric $\frac{1}{\ell+1} \sum_{h = 0}^\ell \norm{(\I - \T^{h+1}) \x^h}^2$ when expansion may occur.

Let us now discuss how assumption~\cref{eq:as-averaged-hp} can be guaranteed in practice.
If expansion may occur, then we can model the operators $\hat{\T}^\ell$ as being $\beta(\ell)$-conically averaged (in the scaled norm $\nnorm{\cdot}$) \cite{bartz_conical_2022}
\begin{align*}
    &\nnorm{\hat{\T}^{\ell+1} \x - \bar{\x}^{\ell+1}}^2 \\ &\leq \nnorm{\x - \bar{\x}^{\ell+1}}^2 - \frac{1 - \beta(\ell)}{\beta(\ell)} \nnorm{(\I - \T^{\ell+1}) \x}^2
\end{align*}
where $\beta(\ell) > 0$ almost surely. The case of $\beta(\ell) > 1$ then models expansion.
Inspecting \cref{fig:expansion}, it is clear that expansion -- when it occurs -- is bounded, which implies that $\beta(\ell) < \bar{\beta}$ a.s.

Therefore, following \cite[Proposition~2.9]{bartz_conical_2022}, we can guarantee averagedness by applying a \textit{relaxed version} of the operators $\hat{\T}^\ell$:
$$
    (1 - \gamma) \I + \gamma \hat{\T}^\ell
$$
for $\gamma \in (0, 1/\bar{\beta})$. This shows that modifying the operator being applied can indeed lead to assumption~\cref{eq:as-averaged-hp} being guaranteed.

\subsection{Convergence without additive errors}\label{subsec:corollaries-hp}
We conclude this section with two high probability convergence results derived in the absence of additive errors ($\e^\ell = \0$ a.s.) and when the operator is static ($\T^\ell = \T$, $\ell \in \N$).
We remark that \cref{pr:contractive-hp,pr:averaged-hp} do hold in this scenario by setting $\nu = 0$ and $\sigma = 0$; however, the following results present more refined bounds.

\begin{proposition}[Without additive noise]\label{pr:no-additive-noise}
Let \cref{as:stochastic-framework-2,as:contractive} hold, with $\e^\ell = \0$ a.s. and $\T^\ell = \T$, for all $\ell \in \N$. Let $\{ \x^\ell \}_{\ell \in \N}$ be the trajectory generated by~\cref{eq:stochastic-update}.

Let $\epsilon \in (0, p]$ with $p = \prod_{i = 1}^n p_i$, then with probability $1 - \delta(\epsilon, \ell)$ we have:
$$
	\norm{\x^\ell - \bar{\x}} \leq \zeta^{\ell (p - \epsilon)} \norm{\x^0 - \bar{\x}}
$$
where $\fix(\T) = \{ \bar{\x} \}$ and
\begin{equation}\label{eq:sanov-theorem-parameters}
\begin{split}
	\delta(\epsilon, \ell) &= \exp\left( - \ell D(p - \epsilon || p) \right), \\
	D(p - \epsilon || p) &= (p - \epsilon) \log\left( 1 - \frac{\epsilon}{p} \right) + \\ &\quad + (1 - p + \epsilon) \log\left( 1 + \frac{\epsilon}{1 - p} \right).
\end{split}
\end{equation}
\end{proposition}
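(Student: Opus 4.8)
The plan is to specialize the recursion already established in the proof of \cref{pr:contractive} to the noiseless case, and then to bound from below the random number of updates hitting coordinate $i$ by a Chernoff (large-deviation) argument.

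First I would set $e_i^h = 0$ for all $h$ in the iterated estimate~\cref{eq:component-wise-iterated} (equivalently~\cref{eq:component-wise-iterated-2}) used in the proof of \cref{pr:contractive}: the additive sum disappears and one is left with
\[
    \norm{x_i^\ell - x_i^*}_i \leq \zeta^{\beta_i(\ell)} \norm{x_i^0 - x_i^*}_i, \qquad \beta_i(\ell) := \sum_{h = 0}^{\ell-1} u_i^h .
\]
Here, by \cref{as:stochastic-framework}(i), the Bernoulli variables $u_i^0, \ldots, u_i^{\ell-1}$ are i.i.d.\ with parameter $p_i$, so $\beta_i(\ell) \sim \mathcal{B}(\ell, p_i)$. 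Since $\zeta \in (0,1)$ the map $t \mapsto \zeta^t$ is strictly decreasing, so on the event $\{ \beta_i(\ell) \geq \ell(p_i - \epsilon) \}$ we get $\zeta^{\beta_i(\ell)} \leq \zeta^{\ell(p_i - \epsilon)}$ and hence the claimed bound. It therefore remains to show $\pr{\beta_i(\ell) < \ell(p_i-\epsilon)} \leq \delta(\epsilon,\ell)$.

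Next I would invoke the standard lower-tail Chernoff bound for a binomial sum (the i.i.d.\ instance of Sanov's theorem): for $a \in [0, p_i)$,
\[
    \pr{\beta_i(\ell) \leq a\ell} \leq \exp\bigl(-\ell\, D(a \| p_i)\bigr), \qquad D(a\|p) := a\log\frac{a}{p} + (1-a)\log\frac{1-a}{1-p},
\]
which follows by writing $\pr{\beta_i(\ell) \leq a\ell} \leq \inf_{s>0} e^{s a\ell}\,\ev{e^{-s\beta_i(\ell)}} = \inf_{s>0}\exp\bigl(\ell[sa + \log(1-p_i+p_i e^{-s})]\bigr)$ and minimizing the exponent over $s>0$. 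Taking $a = p_i - \epsilon$ — admissible for $\epsilon \in (0,p_i]$, with the boundary case $\epsilon = p_i$ being trivial since then $\zeta^{\ell(p_i-\epsilon)} = 1 \geq \zeta^{\beta_i(\ell)}$ surely and $0\log 0 = 0$ by convention — and rewriting $a/p_i = 1 - \epsilon/p_i$, $(1-a)/(1-p_i) = 1 + \epsilon/(1-p_i)$ reproduces exactly the expression~\cref{eq:sanov-theorem-parameters} for $\delta(\epsilon,\ell)$. Combining with the previous paragraph, the stated inequality holds with probability at least $1 - \pr{\beta_i(\ell) < \ell(p_i-\epsilon)} \geq 1 - \delta(\epsilon,\ell)$.

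The argument is short and essentially everything is immediate once the recursion is in hand; the one genuine ingredient is the Chernoff bound of the third paragraph, which is textbook. The main things I expect to need care with are bookkeeping ones: it is the \emph{lower} tail of $\beta_i(\ell)$ that must be controlled (the estimate improves when \emph{many} updates occur on coordinate $i$), the inequality direction coming from $\zeta<1$, and the degenerate behaviour of $D(\cdot\|\cdot)$ as $\epsilon \to p_i$.
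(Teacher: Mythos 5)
Your proposal is correct and follows essentially the same route as the paper: specialize the iterated contraction bound to get $\norm{x_i^\ell - x_i^*}_i \leq \zeta^{\beta_i(\ell)}\norm{x_i^0 - x_i^*}_i$ and then control the lower tail of the binomial $\beta_i(\ell)$, the only difference being that you derive the large-deviation bound by a direct Chernoff computation whereas the paper cites Sanov's theorem for the identical estimate. Your explicit handling of the boundary case $\epsilon = p_i$ is a small additional nicety.
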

\begin{proof}
Setting $\sigma = 0$ and $\e^\ell = \0$ in~\cref{eq:intermediate-bound-4} we have
$$
	\norm{\x^\ell - \bar{\x}} \leq \zeta^{\beta(\ell)} \norm{\x^0 - \bar{\x}}
$$
where $\beta(\ell) \sim \mathcal{B}(\ell, p)$. Using Sanov's theorem \cite[Theorem~D.3]{mohri_foundations_2018} (in particular its symmetric form in \cite[eq.~(D.7)]{mohri_foundations_2018}) we know that
$$
	\pr{\beta(\ell) \leq \ell (p - \epsilon)} \leq \exp\left( - \ell D(p - \epsilon || p) \right)
$$
and, since $\pr{\zeta^{\beta(\ell)} \geq \zeta^{\ell (p - \epsilon)}} = \pr{\beta(\ell) \leq \ell (p - \epsilon)}$, the thesis follows.
\end{proof}

We observe that in \cref{pr:no-additive-noise} the convergence rate is characterized by $\zeta^{p - \epsilon}$, which is larger than the convergence rate $\zeta$ achieved in the deterministic case (cf. the discussion in \cref{subsec:difference-from-deterministic}).
We further notice that $\delta(\epsilon, \ell)$ is a decreasing function of $\ell$, with $\delta(\epsilon, \ell) \to 0$ as $\ell \to \infty$. This implies that asymptotically, the bound holds with probability $\lim_{\ell \to \infty} 1 - \delta(\epsilon, \ell) = 1$. As a consequence, \cref{pr:no-additive-noise} yields the known fact that \textit{asymptotic almost sure convergence is achieved} \cite{combettes_stochastic_2015,bianchi_coordinate_2016} -- but with the difference that a probabilistic bound is also provided for any $\ell \in \N$.

\smallskip

A similar result can also be derived for the averaged case.

\begin{proposition}[Without additive noise -- averaged]\label{pr:no-additive-noise-averaged}
Let \cref{as:stochastic-framework-2,as:averaged} hold, with $e_i = 0$ a.s. for all $i \in \{ 1, \ldots, n \}$; let $\{ \x^\ell \}_{\ell \in \N}$ be the trajectory generated by~\cref{eq:stochastic-update}.

Let $\epsilon \in (0, p]$ with $p = \prod_{i = 1}^n$, then with probability $1 - \delta(\epsilon, \ell+1)$ we have:
$$
	\norm{(\I - \T) \x^\ell}^2 \leq \frac{1}{(\ell + 1) (p - \epsilon)} \frac{\alpha}{1 - \alpha} \norm{\x^0 - \bar{\x}}^2
$$
where $\delta(\epsilon, \ell+1)$ and $D(p - \epsilon || p)$ are defined as in~\cref{eq:sanov-theorem-parameters}.
\end{proposition}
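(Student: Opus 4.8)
The plan is to follow the template of the proof of \cref{pr:no-additive-noise}, with the linear-contraction estimate replaced by the cumulative fixed-point residual estimate of \cref{pr:averaged}. First, set $e_i = 0$ (so $m_i^\ell = 0$) in the chain \cref{eq:error-averaged}; what remains is the inequality, deterministic once the $u_i^h$ are fixed,
$$
\norm{x_i^{h+1} - x_i^*}_i^2 \leq \norm{x_i^h - x_i^*}_i^2 - u_i^h \frac{1-\alpha}{\alpha}\norm{(\I_i - \T_i)\x^h}_i^2 .
$$
Telescoping this over $h = 0, \ldots, \ell$ and discarding the nonnegative term $\norm{x_i^{\ell+1} - x_i^*}_i^2$ yields the averaged-operator analogue of \cref{eq:component-wise-iterated-2}, namely
$$
\sum_{h=0}^\ell u_i^h \norm{(\I_i - \T_i)\x^h}_i^2 \leq \frac{\alpha}{1-\alpha}\norm{x_i^0 - x_i^*}_i^2 .
$$

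The second step is to extract from this sum a bound on the single residual $\norm{(\I_i - \T_i)\x^\ell}_i^2$. With $\beta_i(\ell+1) := \sum_{h=0}^\ell u_i^h \sim \mathcal{B}(\ell+1, p_i)$ denoting the number of updates of coordinate $i$ up to time $\ell$, I would invoke a monotonicity property of the per-coordinate residual along the trajectory --- the counterpart, in the stochastic-coordinate setting, of the monotone decrease of $\norm{(\I - \T)\x^\ell}$ for the \km iteration of an averaged operator (\cref{lem:krasnoselskii-mann}) --- to conclude that $\norm{(\I_i - \T_i)\x^\ell}_i^2$ is no larger than any of the $\beta_i(\ell+1)$ terms present in the sum, hence
$$
\beta_i(\ell+1)\,\norm{(\I_i - \T_i)\x^\ell}_i^2 \leq \frac{\alpha}{1-\alpha}\norm{x_i^0 - x_i^*}_i^2 .
$$

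The last step is identical to the proof of \cref{pr:no-additive-noise}: by the symmetric form of Sanov's theorem, for any $\epsilon \in (0, p_i]$ we have $\pr{\beta_i(\ell+1) \leq (\ell+1)(p_i - \epsilon)} \leq \exp(-(\ell+1) D(p_i - \epsilon || p_i)) = \delta(\epsilon, \ell+1)$, so on the complementary event, which has probability at least $1 - \delta(\epsilon, \ell+1)$, one may replace $\beta_i(\ell+1)$ by $(\ell+1)(p_i - \epsilon)$ in the previous display, which is exactly the claimed bound.

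I expect the monotonicity claim used in the second step to be the main obstacle. For the plain Banach-Picard/\km iteration of an averaged operator the fixed-point residual is non-increasing, but here only a Bernoulli-selected block of coordinates is refreshed at each iteration and, between two consecutive updates of coordinate $i$, the other coordinates keep moving, so it is not immediate that $\norm{(\I_i - \T_i)\x^h}_i$ decreases. One route is to pass to the subsequence of iterations at which coordinate $i$ is updated and to check that the intervening updates of the other coordinates cannot increase that coordinate's residual; if this fails, the same telescoping-plus-Sanov computation still delivers the weaker (but unconditional) best-iterate bound on $\min_{0 \le h \le \ell}\norm{(\I_i - \T_i)\x^h}_i^2$.
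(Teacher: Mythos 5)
Your proposal follows the paper's proof essentially step for step: telescoping \cref{eq:error-averaged} with $e_i = 0$, lower-bounding the resulting sum by $\beta_i(\ell+1)\,\norm{(\I_i - \T_i)\x^\ell}_i^2$ via monotonicity of the residual along the updated iterations, and then applying the symmetric form of Sanov's theorem to $\beta_i(\ell+1) \sim \mathcal{B}(\ell+1, p_i)$. The monotonicity step you flag as the main obstacle is precisely where the paper itself is thinnest---it asserts that stochastic Fej\'er monotonicity makes $\{ \norm{(\I_i - \T_i)\x^\ell}_i \}_{\ell \,:\, u_i^\ell = 1}$ monotonically decreasing without addressing your (valid) concern that $\T_i \x$ depends on the other coordinates, which keep moving between updates of coordinate $i$---so your reconstruction matches the paper's argument, caveat included.
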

\begin{proof}
See \cref{proof:pr:no-additive-noise-averaged}.
\end{proof}

Recalling again the discussion in \cref{subsec:difference-from-deterministic}, in the deterministic case we know that $\{ \norm{(\I - \T) \x^\ell} \}_{\ell \in \N}$ is monotonically decreasing, and such that \cite{davis_convergence_2016}
$$
	\norm{(\I - \T) \x^\ell}^2 \leq \frac{1}{\ell+1} \frac{\alpha}{1 - \alpha} \norm{\x^0 - \bar{\x}}^2.
$$
Therefore we see that the introduction of random coordinate updates degrades the convergence rate, namely from $1 / (\ell+1)$ to the larger $1 / ((\ell+1) (p - \epsilon))$.

\section{Illustrative Numerical Results}\label{sec:numerical}
In this section we illustrate the accordance of the theoretical results provided in this paper with numerical results derived when applying the federated learning \cref{alg:federated-gradient} to solve a logistic regression problem.
In particular, we consider problem~\cref{eq:federated-problem},
$
    \min_{x_i \in \R^q} \sum_{i = 1}^n f_i(x_i) \quad \text{s.t.} \ x_1 = x_2 = \ldots = x_n,
$
where the local data $\{ (a_i^h, b_i^h) \in \R^{1 \times q} \times \{-1, 1\} \}_{h = 1}^{m_i}$ define the loss
$$
    f_i(x) = \sum_{h = 1}^{m_i} \log\left( 1 + \exp(- b_i^h a_i^h x) \right) + \frac{\epsilon}{2} \norm{x}^2.
$$
In our experiments we have $n = 10$ agents with $m_i = 150$ data-points each, and the problem size is $q = 10$. The data are randomly generated using the \texttt{make\_classification} utility of \texttt{sklearn} \cite{scikit-learn}.
The regularization weight is set to $\epsilon = 0.075$. Owing to the regularization, the problem is strongly convex, and thus the federated algorithm we apply is contractive (cf. \cref{app:derivation-fl-algorithm}).

In the following sections we apply \cref{alg:federated-gradient} to this problem, subject to the challenges of \textit{asynchrony} and \textit{additive errors}. The simulations are implemented using \texttt{tvopt} \cite{bastianello_tvopt_2021}, and all results are averaged over $100$ Monte Carlo iterations.

\subsection{Asynchrony}
We start by considering \cref{alg:federated-gradient} in which each agent has a probability $p$ of updating at iteration $\ell \in \N$; therefore we choose $\{ u_i \}_{i = 1}^n$ to be i.i.d. Bernoulli of mean $p$.
\cref{tab:asynchrony} reports the empirical convergence rate attained for the different update probabilities. Such rate is estimated by computing the slope of the mean error curves $\{ \ev{\norm{\x^\ell - \bar{\x}}} \}_{\ell \in \N}$.
\begin{table}[!ht]
\begin{center}
\caption{Empirical convergence rate for different update probabilities.}
\label{tab:asynchrony}
\begin{tabular}{cc}
    Update probability      & Convergence rate         \\
    \hline
    $0.1$                   & $0.9985$                 \\
    $0.2$                   & $0.9972$                 \\
    $0.3$                   & $0.9958$                 \\
    $0.4$                   & $0.9945$                 \\
    $0.5$                   & $0.9931$                 \\
    $0.6$                   & $0.9917$                 \\
    $0.7$                   & $0.9904$                 \\
    $0.8$                   & $0.9890$                 \\
    $0.9$                   & $0.9876$                 \\
    $1.0$                   & $0.9862$                 \\
    \hline
\end{tabular}
\end{center}
\end{table}
The results show how the larger the probability of each agent performing an update, the smaller the convergence rate. This is in accordance with the results of \cref{pr:contractive}. We remark that the algorithm still converges to the optimal solution, even if at different rates.

\subsection{Synthetic additive errors}
We evaluate now the effect of additive errors on the performance of \cref{alg:federated-gradient}, in a fully synchronous case ($p_i = 1$). In particular, each local update to $x_i^\ell$ (line~$5$ in the pseudo-code) is subject to an additive error $e_i^\ell$ whose components are drawn from the Weibull distribution
\footnote{Notice that usually the Weibull distribution is characterized by the CDF $1 - \exp(- (x / \nu)^c)$ \cite{rinne_weibull_2009}, and hence is indeed a sub-Weibull, with parameters $\theta = 1/c$ and $\nu$ (cf. \cite{vladimirova_subweibull_2020}).}.
With this choice, by \cref{lem:norm-sub-weibull} we know that the norm of $e_i^\ell$ is sub-Weibull and thus satisfies \cref{as:stochastic-framework}(ii).

\begin{lemma}[Norm of sub-Weibull vectors]\label{lem:norm-sub-weibull}
Let $e = [e_1, \ldots, e_q]^\top$ be a random vector in $\R^q$, $q < +\infty$, such that $e_i \sim \sw(\theta, \nu)$, $i \in \{ 1, \ldots, q \}$. Then the Euclidean norm of $e$ is sub-Weibull with
$$
    \norm{e}^2 \sim \sw(2 \theta, \max\{ 1, 2^{2 \theta} \} q \nu).
$$
\end{lemma}
\begin{proof}
We want to characterize $\norm{e}^2 = \sum_{i = 1}^q e_i^2$ as a sub-Weibull. By \cref{lem:closure-sub-weibull-class} we know that $e_i^2 \sim \sw(2 \theta, \max\{ 1, 2^{2\theta} \} \nu^2)$, and it follows that $\sum_{i = 1}^q e_i^2 \sim \sw(2 \theta, q \max\{ 1, 2^{2\theta} \} \nu^2)$.
\end{proof}

In \cref{tab:errors} we report the mean asymptotic error (computed as the maximum mean error in the last $4/5$ of the simulation) for different values of $\theta$ and $\nu$. Additionally, we report the accuracy of the trained model on the training and test sets, computed as the percentage of correctly classified data points. The test set consists of $150$ data points generated from the same distribution as the training points.
\begin{table}[!ht]
\begin{center}
\caption{Asymptotic error, training and test accuracies for different sub-Weibull distributions of the additive noise.}
\label{tab:errors}
\begin{tabular}{ccccc}
    \hline
    $(\theta, \nu)$     & As. err.      & Training acc. [\%]    & Test acc. [\%]    \\[0.35em]
    $(1/2, 0.01)$       & $1.305$       & $90.167$              & $86.653$          \\
    $(1/2, 0.1)$        & $17.842$      & $72.757$              & $76.727$          \\
    $(1, 0.01)$         & $1.482$       & $89.720$              & $85.653$          \\
    $(1, 0.1)$          & $20.406$      & $71.939$              & $76.627$          \\
    $(2, 0.01)$         & $3.162$       & $84.945$              & $84.527$          \\
    $(2, 0.1)$          & $43.330$      & $67.924$              & $72.060$          \\
    \hline
\end{tabular}
\end{center}
\end{table}
In accordance with the theoretical results (cf. \cref{pr:contractive-hp}), the heavier the tail (\textit{i.e.} the larger $\theta$) or the larger the scaling parameter $\nu$, the larger the error attained by the algorithm.
As expected, the accuracy also degrades the larger the errors are. However, it is interesting to notice that in moving from $\nu = 0.01$ to $\nu = 0.1$, the test accuracy becomes better than the training accuracy. This may be due to the fact that larger additive errors improve the generalization capabilities of the model.

\subsection{Stochastic gradients}
We conclude this section by discussing the performance of \cref{alg:federated-gradient} when the agents use stochastic gradients during the local updates. In particular, the gradients are approximated by using a random subset of $B$ data points.
\begin{table}[!ht]
\begin{center}
\caption{Asymptotic error, training and test accuracies for different batch sizes of the stochastic gradients.}
\label{tab:stochastic-gradients}
\begin{tabular}{ccccc}
    \hline
    $B$        & As. err.      & Training acc. [\%]    & Test acc. [\%]    \\[0.35em]
    $1$        & $1.298$       & $92.251$              & $89.680$          \\
    $5$        & $1.117$       & $92.248$              & $89.753$          \\
    $10$       & $0.970$       & $92.289$              & $89.727$          \\
    $50$       & $0.468$       & $92.619$              & $88.673$          \\
    $100$      & $0.186$       & $92.803$              & $88.667$          \\
    Full grad. & $0$           & $92.733$              & $88.667$          \\
    \hline
\end{tabular}
\end{center}
\end{table}
We notice that the larger the batch size, the smaller the asymptotic error, since the additive error we introduce is smaller. This is also illustrated by the training accuracy which roughly increases with $B$. On the other hand, the test accuracy degrades as the batch size grows larger than $10$, signaling that the smaller the errors introduced by the stochastic gradients, the more the trained model is over-fitting the training data.

\appendices
\crefalias{section}{appsec}
\crefalias{subsection}{appsec}

\section{Proof of \texorpdfstring{\cref{lem:closure-sub-weibull-class}}{Lemma~8}}\label{proof:lem:closure-sub-weibull-class}
\emph{Proof of 1)} The result follows by $\norm{a x}_k = |a| \norm{x}_k \leq |a| \nu k^\theta$.

\emph{Proof of 2)} For completeness we report the proof provided in \cite[Proposition~3]{vladimirova_subweibull_2020}. Using the triangle inequality we write
\begin{align*}
	\norm{x_1 + x_2}_k &\leq \norm{x_1}_k + \norm{x_2}_k \overset{\text{(i)}}{\leq} \nu_1 k^{\theta_1} + \nu_2 k^{\theta_2} \\ &\overset{\text{(ii)}}{\leq} (\nu_1 + \nu_2) k^{\max\{ \theta_1, \theta_2 \}},
\end{align*}
where (i) holds by the assumption that $x_i$ are sub-Weibull, and (ii) holds since $k \geq 1$.

\emph{Proof of 3)} By definition of $\norm{\cdot}_k$ we can write
\begin{align*}
	\norm{x y}_k &\leq \ev{|x|^k |y|^k}^{1/k} \overset{\text{(i)}}{=} \ev{|x|^k}^{1/k} \ev{|y|^k}^{1/k} \\ &\overset{\text{(ii)}}{\leq} \nu_1 k^{\theta_1} \nu_2 k^{\theta_2} = \nu_1 \nu_2 k^{\theta_1 + \theta_2}
\end{align*}
where (i) holds by independence and (ii) by sub-Weibull assumption.

\emph{Proof of 4)} By definition of $\norm{\cdot}_k$ we have $\norm{x^a}_k = \ev{|x^a|^k}^{1/k} = \ev{|x|^{a k}}^{1/k}$. Now, we distinguish two cases: if $0 < a < 1$ then by Jensen's inequality we have
$$
    \ev{|x|^{a k}}^{1/k} \leq \left( \ev{|x|^k}^{1/k} \right)^a \leq (\nu k^\theta)^a;
$$
instead if $a \geq 1$ then we can write
$$
    \ev{|x|^{a k}}^{1/k} = \left( \ev{|x|^{a k}}^{1/a k} \right)^a \overset{\text{(i)}}{\leq} \left( \nu (a k)^\theta \right)^a = \nu^a a^{a \theta} k^\theta
$$
where (i) holds by the fact that $a k \geq 1$ and that $x$ is sub-Weibull. \qed

\section{\texorpdfstring{\cref{alg:federated-gradient}}{Algorithm~1}}\label{app:derivation-fl-algorithm}

\subsection{Derivation of the algorithm}
Consider~\cref{eq:naive-federated-algorithm}, and redefine, $i \in \{ 1, \ldots, n \}$
$$
    x_i^{\ell+1} = z^\ell - \gamma \nabla f_i(z^\ell) = \mathcal{G}_i z^\ell
$$
where $z^{\ell} = \frac{1}{n} \sum_{i = 1}^n x_i^{\ell}$ represents the average computed by the coordinator.

Consider now the asynchronous set-up described in \cref{subsec:motivating-ex}, in which only the subset $\Is^\ell$ of the agents is active. In this case, we need to modify the local update as follows
$$
	x_i^{\ell+1} = \begin{cases}
		\mathcal{G}_i z^\ell & \text{if} \ i \in \Is^\ell \\
		x_i^\ell & \text{otherwise},
	\end{cases}
$$
so that only the active agents update their state. Since the coordinator receives new information only from the active agents, then we can modify its update as
$$
	z^{\ell+1} = \frac{1}{n} \left( \sum_{i \in \Is^\ell} x_i^{\ell+1} + \sum_{i \not\in \Is^\ell} x_i^\ell \right).
$$
With this update, the coordinator aggregates the new information received from $\Is^\ell$ with the most recent information received from the inactive agents (which may have been transmitted several iterations ago, \textit{e.g.} if $x_i^{\ell+1} = x_i^\ell = x_i^{\ell-1}$).

\subsection{Interpretation as stochastic operator}
The goal now is to show that \cref{alg:federated-gradient} derived in the previous section can be interpreted as a stochastic operator that fits into the framework of the paper.

We start with the deterministic algorithm (all agents are always active). Using $z^\ell = \frac{1}{n} \sum_{i = 1}^n x_i^\ell$, the algorithm is described as
$$
	x_i^{\ell+1} = \mathcal{G}_i \left( \frac{1}{n} \sum_{j = 1}^n x_j^\ell \right).
$$
Letting $\x = (x_1, \ldots, x_n)$ the overall algorithm therefore is characterized by the update
$
	\x^{\ell+1} = \T \x^\ell = (\mathcal{G} \circ \mathcal{A}) \x^\ell
$
with $\circ$ denoting composition of operators, and where $\mathcal{G} \x = (\mathcal{G}_1 x_1, \ldots, \mathcal{G}_n x_n)$ and
$$
	\mathcal{A} \x = \left( \frac{1}{n} \sum_{i = 1}^n x_i, \ldots, \frac{1}{n} \sum_{i = 1}^n x_i \right).
$$

Let us now turn to the asynchronous set-up in which only the active agents $\Is^\ell$ perform an update. Letting $u_i^\ell$ be a Bernoulli r.v. which is $1$ if agent $i$ updates at time $\ell$, then we can write
$$
	x_i^{\ell+1} = (1 - u_i^\ell) x_i^\ell + \T_i \x^\ell,
$$
which fits exactly into the framework of \cref{subsec:framework}.

We conclude this section by discussing the properties of $\T$ as derived from the properties of the problem
$$
	\min_{x_i \in \R^q} \sum_{i = 1}^n f_i(x_i) \quad \text{s.t.} \ x_1 = x_2 = \ldots = x_n.
$$
We remark first that $\mathcal{A}$ is non-expansive, and we need to characterize the properties of $\mathcal{G}$.
Assume that the local costs have $\lmax$-Lipschitz continuous gradients, and that they are $\lmin$-strongly convex, where we allow $\lmin = 0$ to signify that the costs are convex. Then we have the following cases:
\begin{itemize}
	\item convex costs ($\lmin = 0$): if $\gamma < 2 / \lmax$ then $\mathcal{G}$ is averaged, and hence so is $\T$ \cite[section~3.3]{davis_convergence_2016};
	
	\item strongly convex costs ($\lmin > 0$): if $\gamma < 2 / \lmax$ then $\mathcal{G}$ is contractive and by \cite[Lemma~4.11]{bauschke_firmly_2012} so is $\mathcal{T}$.
\end{itemize}
Finally, notice that the fixed point(s) of $\T$ do not coincide with solutions of the optimization problem, but rather given $\bar{\x} \in \fix(\T)$ then
$
	\bar{z} = \frac{1}{n} \sum_{i = 1}^n \bar{x}_i
$
is a solution to the problem.

\section{Proofs of \texorpdfstring{\Cref{sec:mean-convergence}}{Section~IV}}\label{proof:sec:mean-convergence}

\subsection{Proof of \texorpdfstring{\cref{pr:contractive}}{Proposition~1}}\label{proof:pr:contractive}
Similarly to \textit{e.g.} \cite{combettes_stochastic_2015,bianchi_coordinate_2016}, the first step is to define the norm
$$
	\nnorm{\x} := \sqrt{\sum_{i = 1}^n \frac{1}{p_i} \norm{x_i}_i^2}
$$
for which, letting $\pmax := \max_i p_i$ and $\pmin = \min_i p_i$, it holds that
\begin{equation}\label{eq:norm-equivalence}
	\pmin \nnorm{\x}^2 \leq \norm{\x}^2 \leq \pmax \nnorm{\x}^2.
\end{equation}
By the triangle inequality and the fact that $\nnorm{(u_1^\ell e_1^\ell, \ldots, u_n^\ell e_n^\ell)} \leq \nnorm{\e^\ell}$ we have
$$
	\nnorm{\x^{\ell+1} - \bar{\x}^{\ell+1}} \leq \nnorm{\hat{\T}^{\ell+1} \x^\ell - \bar{\x}^{\ell+1}} + \nnorm{\e^\ell}
$$
where $\hat{\T}^{\ell+1}$ is defined in~\cref{eq:stochastic-update}. Taking the expected value, by~\eqref{eq:norm-equivalence} we have $\ev{\nnorm{\e^\ell}} \leq \ev{\norm{\e^\ell}} / \sqrt{\pmin}$, and by \cref{as:stochastic-framework}(ii) and Jensen's inequality we have $\ev{\norm{\e^\ell}} \leq \nu$, hence
$
	\ev{\nnorm{\e^\ell}} \leq \nu / \sqrt{\pmin}.
$
We focus now on the first term. By the law of total expectation and Jensen's inequality we have
\begin{align*}
	&\ev{\nnorm{\hat{\T}^{\ell+1} \x^\ell - \bar{\x}^{\ell+1}}} = \ev{\ev[\ell]{\nnorm{\hat{\T}^{\ell+1} \x^\ell - \bar{\x}^{\ell+1}}}} \\
	&\hspace{3cm} \leq \ev{\sqrt{\ev[\ell]{\nnorm{\hat{\T}^{\ell+1} \x^\ell - \bar{\x}^{\ell+1}}^2}}}
\end{align*}
where $\ev[\ell]{\cdot}$ denotes the expectation conditioned on $\x^\ell$. By linearity of the expected value we can write
\begin{align}
	&\ev[\ell]{\nnorm{\hat{\T}^{\ell+1} \x^\ell - \bar{\x}^{\ell+1}}^2} = \nonumber \\
	&= \sum_{i = 1}^n \ev[\ell]{\frac{1}{p_i} \norm{\hat{\T}_i^{\ell+1} \x^\ell - \bar{x}_i^{\ell+1}}_i^2} \nonumber \\
	&\overset{(a)}{=} \sum_{i = 1}^n \ev[\ell]{\frac{u_i^\ell}{p_i} \norm{\T_i^{\ell+1} \x^\ell - \bar{x}_i^{\ell+1}}_i^2 + \frac{1 - u_i^\ell}{p_i} \norm{x_i^\ell - \bar{x}_i^{\ell+1}}_i^2} \nonumber \\
	&\overset{(b)}{=} \norm{\T^{\ell+1} \x^\ell - \bar{\x}^{\ell+1}}^2 + \sum_{i = 1}^n \frac{1 - p_i}{p_i} \norm{x_i^\ell - \bar{x}_i^{\ell+1}}_i^2 \label{eq:intermediate-bound} \\
	&\overset{(c)}{\leq} \zeta^2 \norm{\x^\ell - \bar{\x}^{\ell+1}}^2 + \sum_{i = 1}^n \frac{1 - p_i}{p_i} \norm{x_i^\ell - \bar{x}_i^{\ell+1}}_i^2 \nonumber \\
	&= \sum_{i = 1}^n \frac{1}{p_i} (1 - p_i + p_i \zeta^2) \norm{x_i^\ell - \bar{x}_i^{\ell+1}}_i^2 \nonumber \\
    &\overset{(d)}{\leq} \chi^2 \nnorm{\x^\ell - \bar{\x}^{\ell+1}}^2 \label{eq:contractive-in-mean}
\end{align}
where ($a$) holds by definition of $\hat{\T}^{\ell+1}$, ($b$) by the fact that $u_i^\ell \sim Ber(p_i)$, ($c$) holds by the contractiveness in \cref{as:contractive}(i), and ($d$) by defining $\chi = \sqrt{\max_i 1 - p_i + p_i \zeta^2} = \sqrt{1 - \pmin + \pmin \zeta^2}$.

Putting this bound together with that for $\ev{\nnorm{\e^\ell}}$:
\begin{align}
	\ev{\nnorm{\x^{\ell+1} - \bar{\x}^{\ell+1}}} &\leq \chi \ev{\nnorm{\x^\ell - \bar{\x}^{\ell+1}}} + \nu / \sqrt{\pmin} \nonumber \\
	&\hspace{-1.5cm}\leq \chi \ev{\nnorm{\x^\ell - \bar{\x}^\ell}} + \chi \sigma + \nu / \sqrt{\pmin} \label{eq:intermediate-bound-3}
\end{align}
where the last inequality holds by triangle inequality and \cref{as:contractive}(ii). Iterating and using the geometric sum
$$
	\ev{\nnorm{\x^\ell - \bar{\x}^\ell}} \leq \chi^\ell \nnorm{\x^0 - \bar{\x}^0} + \frac{1 - \chi^\ell}{1 - \chi} (\chi \sigma + \frac{\nu}{\sqrt{\pmin}}),
$$
and the thesis follows by~\cref{eq:norm-equivalence}. \qed

\subsection{Proof of \texorpdfstring{\cref{pr:averaged}}{Proposition~2}}\label{proof:pr:averaged}
Let $\llangle \cdot \rrangle$ be the inner product that induces $\nnorm{\cdot}$, then we can write
\begin{align*}
	&\nnorm{\x^{\ell+1} - \bar{\x}^{\ell+1}}^2 = \\
	&=\nnorm{\hat{\T}^{\ell+1} \x^\ell - \bar{\x}^{\ell+1}}^2 + \nnorm{(u_1^\ell e_1^\ell, \ldots, u_n^\ell e_n^\ell)}^2 + \\ &+ 2 \llangle \hat{\T}^{\ell+1} \x^\ell - \bar{\x}^{\ell+1}, (u_1^\ell e_1^\ell, \ldots, u_n^\ell e_n^\ell) \rrangle \\
	&\leq \nnorm{\hat{\T}^{\ell+1} \x^\ell - \bar{\x}^{\ell+1}}^2 + \nnorm{\e^\ell}^2 + 2 \diam(\D) \sqrt{\pmin} \nnorm{\e^\ell}
\end{align*}
where the last inequality follows by $\nnorm{(u_1^\ell e_1^\ell, \ldots, u_n^\ell e_n^\ell)} \leq \nnorm{\e^\ell}$ and by Cauchy-Schwarz inequality.
Using the law of total expectation we can write
$
	\ev{\nnorm{\hat{\T}^{\ell+1} \x^\ell - \bar{\x}^{\ell+1}}^2} = \ev{\ev[\ell]{\nnorm{\hat{\T}^{\ell+1} \x^\ell - \bar{\x}^{\ell+1}}^2}}
$
where recall that $\ev[\ell]{\cdot}$ is conditioned on $\x^\ell$. Following the steps leading to~\cref{eq:intermediate-bound} we have
\begin{align*}
	&\ev[\ell]{\nnorm{\hat{\T}^{\ell+1} \x^\ell - \bar{\x}^{\ell+1}}^2} = \\
	&= \norm{\T^{\ell+1} \x^\ell - \bar{\x}^{\ell+1}}^2 + \sum_{i = 1}^n \frac{1 - p_i}{p_i} \norm{x_i^\ell - \bar{x}_i^{\ell+1}}_i^2 \\
	&\overset{(a)}{\leq} \norm{\x^\ell - \bar{\x}^{\ell+1}}^2 - \frac{1-\alpha}{\alpha} \norm{(\I - \T^{\ell+1}) \x^\ell}^2 + \\ &\hspace{1.5cm} + \sum_{i = 1}^n \frac{1 - p_i}{p_i} \norm{x_i^\ell - \bar{x}_i^{\ell+1}}_i^2 \\
	&= \nnorm{\x^\ell - \bar{\x}^{\ell+1}}^2 - \frac{1-\alpha}{\alpha} \norm{(\I - \T^{\ell+1}) \x^\ell}^2
\end{align*}
where ($a$) follows by averagedness in \cref{as:averaged}(i). Let $\bar{\x}^\ell \in \fix(\T^\ell)$, then
\begin{align*}
	&\nnorm{\x^\ell - \bar{\x}^{\ell+1}}^2 = \nnorm{\x^\ell - \bar{\x}^\ell}^2 + \nnorm{\bar{\x}^\ell - \bar{\x}^{\ell+1}}^2 + \\ &\hspace{2cm} + 2 \llangle \x^\ell - \bar{\x}^\ell, \bar{\x}^\ell - \bar{\x}^{\ell+1} \rrangle \\
	&\leq \nnorm{\x^\ell - \bar{\x}^\ell}^2 + \sigma^2 / \pmin + 2 \diam(\D) \sigma / \sqrt{\pmin}
\end{align*}
where we used \cref{as:averaged}(ii), \cref{eq:norm-equivalence}, and the fact that $\D$ is bounded.
Putting all these results together yields
\begin{align}
	&\ev{\nnorm{\x^{\ell+1} - \bar{\x}^{\ell+1}}^2} \leq \ev{\nnorm{\x^\ell - \bar{\x}^\ell}^2} + \sigma^2 / \pmin + \nonumber \\
	&+ 2 \diam(\D) \sigma / \sqrt{\pmin} + \ev{\nnorm{\e^\ell}^2 + 2 \diam(\D) \nnorm{\e^\ell}} + \nonumber \\
	&- \frac{1-\alpha}{\alpha} \norm{(\I - \T^{\ell+1}) \x^\ell}^2 \nonumber \\
	&\leq \ev{\nnorm{\x^\ell - \bar{\x}^\ell}^2} + d - \frac{1-\alpha}{\alpha} \norm{(\I - \T^{\ell+1}) \x^\ell}^2 \label{eq:intermediate-bound-2}
\end{align}
where the bound
$$
	d := \frac{1}{\pmin} \left( \sigma^2 + \nu^2 \right) + \frac{2 \diam(\D)}{\sqrt{\pmin}} \left( \sigma + \nu \right)
$$
was derived using \cref{as:stochastic-framework}~(ii) and Jensen's inequality.

Reordering~\cref{eq:intermediate-bound-2} and averaging over time yields
\begin{align*}
	&\frac{1}{\ell+1} \sum_{h = 0}^\ell \ev{\norm{(\I - \T^{h+1}) \x^h}^2} \leq \\
	&\qquad \leq \frac{\alpha}{1 - \alpha} \left( \frac{1}{\ell+1} \nnorm{\x^0 - \bar{\x}^0}^2 + d \right)
\end{align*}
where we used the telescopic sum and removed the negative term $- \ev{\nnorm{\x^{\ell+1} - \bar{\x}^{\ell+1}}^2}$. The thesis follows by~\cref{eq:norm-equivalence} and $\sqrt{\pmin} \leq 1$. \qed

\subsection{Proof of \texorpdfstring{\cref{cor:almost-sure-neighborhood}}{Corollary~1}}\label{proof:cor:almost-sure-neighborhood}
By \cref{pr:contractive} we know that
$$
	\ev{\norm{\x^\ell - \bar{\x}^\ell}} \leq \sqrt{\frac{\pmax}{\pmin}} \left( \chi^\ell \norm{\x^0 - \bar{\x}^0} + \frac{1 - \chi^\ell}{1 - \chi} (\chi \sigma + \nu) \right)
$$
and, defining the random variable
$$
    y^\ell := \max\left\{ 0, \sqrt{\frac{\pmin}{\pmax}} \norm{\x^\ell - \bar{\x}^\ell} - \frac{1 - \chi^\ell}{1 - \chi} (\chi \sigma + \nu) \right\},
$$
this fact implies $\ev{y^\ell} \leq \chi^\ell \norm{\x^0 - \bar{\x}^0}$.

By Markov's inequality then we have that, for any $\epsilon > 0$:
$
    \pr{y^\ell \geq \epsilon} \leq \ev{y^\ell} / \epsilon \leq \chi^\ell \norm{\x^0 - \bar{\x}^0} / \epsilon,
$
and summing over time yields
$$
    \sum_{\ell = 0}^\infty \pr{y^\ell \geq \epsilon} \leq \frac{1}{1 - \chi} \frac{\norm{\x^0 - \bar{\x}^0}}{\epsilon} < \infty.
$$
By the Borel-Cantelli lemma, this fact implies that almost surely
$
    \limsup_{\ell \to \infty} y^\ell \leq \epsilon,
$
and, since the inequality holds for any $\epsilon > 0$ the thesis is proved. \qed

\subsection{Proof of \texorpdfstring{\cref{cor:almost-sure-exact}}{Corollary~2}}\label{proof:cor:almost-sure-exact}
By assumption~(ii) the operator is static, which implies that $\sigma = 0$; hereafter $\bar{\x}$ denotes the unique fixed point of $\T$.

Following the same derivation leading to~\eqref{eq:intermediate-bound-3} yields
$$
	\ev{\nnorm{\x^{\ell+1} - \bar{\x}}} \leq \chi \ev{\nnorm{\x^\ell - \bar{\x}}} + \nu^\ell / \sqrt{\pmin}
$$
with the difference that now the right-most term is a function of $\ell$ as well. Iterating and using~\cref{eq:norm-equivalence} we have
$$
	\ev{\norm{\x^\ell - \bar{\x}}} \leq \sqrt{\frac{\pmax}{\pmin}} \left( \chi^\ell \norm{\x^0 - \bar{\x}} + \sum_{h = 0}^{\ell-1} \chi^{\ell - h - 1} \nu^h \right).
$$
Similarly to \cref{cor:almost-sure-neighborhood}, by Markov's inequality we have
$$
	\lim_{\ell \to \infty} \norm{\x^\ell - \bar{\x}} \leq \sqrt{\frac{\pmax}{\pmin}} \lim_{\ell \to \infty} \sum_{h = 0}^{\ell-1} \chi^{\ell - h - 1} \nu^h = 0
$$
where the right-hand side is equal to zero since $\{ \nu^\ell \}_{\ell \in \N}$ is summable and we can apply \cite[Lemma~3.1(a)]{sundharram_distributed_2010} because $\chi \in (0, 1)$. \qed

\section{Proofs of \texorpdfstring{\Cref{sec:high-probability}}{Section~V}}\label{proof:sec:high-probability}

\subsection{Proof of \texorpdfstring{\cref{pr:contractive-hp}}{Proposition~3}}\label{proof:pr:contractive-hp}
We start by deriving the following chain of inequalitites:
\begin{align*}
    &\nnorm{\x^{\ell+1} - \bar{\x}^{\ell+1}} \overset{(i)
    }{\leq} \nnorm{\hat{\T}^{\ell+1} \x^\ell - \bar{\x}^{\ell+1}} + \nnorm{\e^\ell} \\
    &\quad \overset{(ii)}{\leq} \eta(\ell) \nnorm{\x^\ell - \bar{\x}^{\ell+1}} + \nnorm{\e^\ell} \\
    &\quad \overset{(iii)}{\leq} \eta(\ell) \nnorm{\x^\ell - \bar{\x}^\ell} + \eta(\ell) \nnorm{\bar{\x}^{\ell+1} - \bar{\x}^\ell} + \nnorm{\e^\ell} \\
    &\quad \overset{(iv)}{\leq} \eta(\ell) \nnorm{\x^\ell - \bar{\x}^\ell} + \eta(\ell) \frac{\sigma}{\sqrt{\pmin}} + \frac{\norm{\e^\ell}}{\sqrt{\pmin}}
\end{align*}
where (i) follows by the triangle inequality and the fact that $\nnorm{(\ldots, u_i^\ell e_i^\ell, \ldots)} \leq \nnorm{\e^\ell}$; (ii) by assumption~\cref{eq:as-contractive-hp}; (iii) by triangle inequality, and (iv) by \cref{eq:norm-equivalence} and \cref{as:contractive}(ii).

Iterating this inequality and using \cref{eq:norm-equivalence} again, we derive
\begin{equation}\label{eq:intermediate-bound-4}
\begin{split}
    &\nnorm{\x^{\ell+1} - \bar{\x}^{\ell+1}} \leq \frac{1}{\sqrt{\pmin}} \Bigg( \prod_{h = 0}^\ell \eta(h) \norm{\x^0 - \bar{\x}^0} + \\
    &\qquad + \sigma \sum_{h = 0}^\ell \prod_{j = h}^\ell \eta(j) + \sum_{h = 0}^\ell \norm{\e^h} \prod_{j = h+1}^\ell \eta(j) \Bigg).
\end{split}
\end{equation}

The goal now is to show that the right hand side of \cref{eq:intermediate-bound-4} is sub-Weibull.
First of all, by \cref{eq:as-contractive-hp}, \cref{lem:closure-sub-weibull-class}, and simplifying, we get that
$$
    \sum_{h = 0}^\ell \prod_{j = h}^\ell \eta(j) \sim \sw\left( \gamma, \chi \frac{1 - \chi^{\ell+1}}{1 - \chi} \right).
$$
Similarly, using the fact that $\norm{\e^\ell}^2 \sim \sw(\theta, \nu)$ and hence $\norm{\e^\ell} \sim \sw(\theta/2, \nu)$ by \cref{lem:closure-sub-weibull-class}, we get
$$
    \sum_{h = 0}^\ell \norm{\e^h} \prod_{j = h+1}^\ell \eta(j) \sim \sw\left( \theta+1, \nu \frac{1 - \chi^{\ell+1}}{1 - \chi} \right).
$$

Combining these results, and using \cref{lem:closure-sub-weibull-class} again, yields
\begin{align*}
	\cref{eq:intermediate-bound-4} \sim & \frac{1}{\sqrt{\pmin}} \sw\Big( \theta+1, \\ &\chi^{\ell+1} \norm{\x^0 - \bar{\x}^0} + \frac{1 - \chi^{\ell+1}}{1 - \chi} (\chi \sigma + \nu) \Big).
\end{align*}
Using \cref{eq:norm-equivalence} and \cref{lem:high-probability-bound} then yields the thesis. \qed

\subsection{Proof of \texorpdfstring{\cref{pr:averaged-hp}}{Proposition~4}}\label{proof:pr:averaged-hp}
Using the fact that $\nnorm{(\ldots, u_i^\ell e_i^\ell, \ldots)} \leq \nnorm{\e^\ell}$, and Cauchy-Schwarz inequality we can write
\begin{align*}
	&\nnorm{\x^{\ell+1} - \bar{\x}^{\ell+1}}^2 \leq \\ &\leq \nnorm{\hat{\T}^{\ell+1} \x^\ell - \bar{\x}^{\ell+1}}^2 + \nnorm{\e^\ell}^2 + 2 \diam(\D) \nnorm{\e^\ell}.
\end{align*}
But by assumption, the random coordinate update operator $\hat{\T}^{\ell+1}$ is $\alpha(\ell)$-averaged, hence
\begin{align}
	\nnorm{\x^{\ell+1} - \bar{\x}^{\ell+1}}^2 &\leq \nnorm{\x^\ell - \bar{\x}^{\ell+1}}^2 + \label{eq:intermediate-bound-5} \\ &- \frac{1 - \alpha(\ell)}{\alpha(\ell)} \nnorm{(\I - \T^{\ell+1}) \x^\ell}^2 + \nonumber \\ &+ \nnorm{\e^\ell}^2 + 2 \diam(\D) \nnorm{\e^\ell}. \nonumber
\end{align}
Now, by \cref{as:averaged}, Cauchy-Schwarz inequality, and \cref{eq:norm-equivalence} we have that
$$
    \nnorm{\x^\ell - \bar{\x}^{\ell+1}}^2 \leq \nnorm{\x^\ell - \bar{\x}^\ell}^2 + \frac{\sigma^2}{\pmin} + \frac{2 \diam(\D) \sigma}{\sqrt{\pmin}}
$$
and using this fact into~\cref{eq:intermediate-bound-5} yields
\begin{align}
	\nnorm{\x^{\ell+1} - \bar{\x}^{\ell+1}}^2 &\leq \nnorm{\x^\ell - \bar{\x}^{\ell}}^2 + \label{eq:intermediate-bound-6} \\ &- \frac{1 - \alpha(\ell)}{\alpha(\ell)} \nnorm{(\I - \T^{\ell+1}) \x^\ell}^2 + \nonumber \\ &+ \frac{\norm{\e^\ell}^2 + \sigma^2}{\pmin} + 2 \diam(\D) \frac{\norm{\e^\ell} + \sigma}{\sqrt{\pmin}}. \nonumber
\end{align}

Rearranging~\cref{eq:intermediate-bound-6}, averaging over time, and using the telescopic sum we have
\begin{align*}
    &\frac{1}{\ell+1} \sum_{h = 0}^\ell \frac{1 - \alpha(h)}{\alpha(h)} \nnorm{(\I - \T^{h+1}) \x^h}^2 \leq \\
    &\leq \frac{1}{\ell+1} \Bigg( \frac{\norm{\x^0 - \bar{\x}^0}^2}{\pmin} + \\
    &+ \sum_{h = 0}^\ell \frac{\norm{\e^h}^2 + \sigma^2}{\pmin} + 2 \diam(\D) \frac{\norm{\e^h} + \sigma}{\sqrt{\pmin}} \Bigg).
\end{align*}
Now defining $\bar{\alpha} = \max_{h \in \{ 0, \ldots, \ell\}} \alpha(h)$, and using \cref{eq:norm-equivalence} we have that
\begin{align}
    &\frac{1}{\ell+1} \sum_{h = 0}^\ell \norm{(\I - \T^{h+1}) \x^h}^2 \leq \label{eq:intermediate-bound-8} \\
    &\leq \frac{\pmax}{\pmin} \frac{1}{\ell+1} \frac{\bar{\alpha}}{1 - \bar{\alpha}} \Bigg( \norm{\x^0 - \bar{\x}^0}^2 + \nonumber \\
    &+ \sum_{h = 0}^\ell \norm{\e^h}^2 + \sigma^2 + 2 \diam(\D) \sqrt{\pmin} \left( \norm{\e^h} + \sigma \right) \Bigg). \nonumber
\end{align}

Finally, by \cref{as:stochastic-framework-2} and the properties of sub-Weibull r.v.s (cf. \cref{lem:closure-sub-weibull-class}) we know that the right hand side of \cref{eq:intermediate-bound-8} is a sub-Weibull with parameters $\theta$ and
$$
    \sigma^2 + \nu^2 + 2 \diam(\D) \sqrt{\pmin} (\sigma + \nu)
$$
and by \cref{lem:high-probability-bound} this yields the thesis. \qed

\subsection{Proof of \texorpdfstring{\cref{pr:no-additive-noise-averaged}}{Proposition~6}}\label{proof:pr:no-additive-noise-averaged}
Setting $\sigma = 0$ and $\norm{\e^\ell} = \0$ in~\cref{eq:intermediate-bound-5} yields
\begin{equation}\label{eq:intermediate-bound-7}
	\norm{\x^{\ell+1} - \bar{\x}}^2 \leq \norm{\x^\ell - \bar{\x}}^2 - u^\ell \frac{1-\alpha}{\alpha} \norm{(\I - \T) \x^\ell}^2,
\end{equation}
which implies $\norm{\x^{\ell+1} - \bar{\x}}^2 \leq \norm{\x^\ell - \bar{\x}}^2$, that is, the operator is \emph{stochastic Fej\'er monotone} \cite{combettes_stochastic_2015,combettes_stochastic_2019}. This means that the fixed point residual $\{ \norm{(\I - \T) \x^\ell} \}_{\ell \in \N | u^\ell = 1}$ is a.s. monotonically decreasing \cite[Theorem~3.2]{combettes_stochastic_2015} (cf. in particular \cite[(3.3)]{combettes_stochastic_2015}).

Therefore, summing~\cref{eq:intermediate-bound-7} over time and using Fej\'er monotonicity we can write
\begin{align*}
	\beta(\ell) \norm{(\I - \T) \x^\ell}^2 &\leq \sum_{h = 0}^\ell u^h \norm{(\I - \T) \x^h}^2 \\ &\leq \frac{\alpha}{1 - \alpha} \norm{\x^0 - \bar{\x}}^2
\end{align*}
where we used the fact that the sequence $\{ \norm{\x^{\ell+1} - \x^\ell} \}_{\ell \in \N | u^\ell = 1}$ has $\beta(\ell)$ non-zero terms. Dividing by $\beta(\ell)$ on both sides we get
$$
	\norm{\x^{\ell+1} - \x^\ell}^2 \leq \frac{1}{\beta(\ell)} \frac{\alpha}{1 - \alpha} \norm{\x^0 - \bar{\x}}^2.
$$
We have now the following fact
\begin{align*}
	&\pr{\frac{1}{\beta(\ell)} \norm{\x^0 - \bar{\x}}^2 \geq \frac{1}{(p - \epsilon) (\ell + 1)} \norm{\x^0 - \bar{\x}}^2} = \\ &= \pr{\beta(\ell) \leq (\ell + 1) (p - \epsilon)} \overset{\text{(i)}}{\leq} \exp\left( - (\ell + 1) D(p - \epsilon || p) \right)
\end{align*}
where (i) holds by Sanov's theorem \cite[Theorem~D.3]{mohri_foundations_2018} (cf. \cite[eq.~(D.7)]{mohri_foundations_2018}), and the thesis follows. \qed


\bibliographystyle{IEEEtran} 
\bibliography{references}

\vfill

\end{document}